\begin{document}
\theoremstyle{plain}
\newtheorem{Thm}{Theorem}
\newtheorem{Cor}[Thm]{Corollary}
\newtheorem{Ex}[Thm]{Example}
\newtheorem{Con}[Thm]{Conjecture}
\newtheorem{Main}{Main Theorem}
\newtheorem{Lem}[Thm]{Lemma}
\newtheorem{Prop}[Thm]{Proposition}

\theoremstyle{definition}
\newtheorem{Def}[Thm]{Definition}
\newtheorem{Note}[Thm]{Note}
\newtheorem{Question}[Thm]{Question}

\theoremstyle{remark}
\newtheorem{notation}[Thm]{Notation}
\renewcommand{\thenotation}{}

\errorcontextlines=0
\renewcommand{\rm}{\normalshape}%
\newcommand{\transv}{\mathrel{\text{\tpitchfork}}}
\makeatletter
\newcommand{\tpitchfork}{%
  \vbox{
    \baselineskip\z@skip
    \lineskip-.52ex
    \lineskiplimit\maxdimen
    \m@th
    \ialign{##\crcr\hidewidth\smash{$-$}\hidewidth\crcr$\pitchfork$\crcr}
  }%
}
\makeatother

\title[Evolution to the Bishop Family]{Parabolic evolution with boundary to the Bishop family of holomorphic discs}

\author{Brendan Guilfoyle}
\address{Brendan Guilfoyle\\
          School of Scienec Technology Engineering and Mathematics\\
          Munster Technological University, Kerry\\
          Tralee\\
          Co. Kerry\\
          Ireland.}
\email{brendan.guilfoyle@mtu.ie}
\author{Wilhelm Klingenberg}
\address{Wilhelm Klingenberg\\
 Department of Mathematical Sciences\\
 University of Durham\\
 Durham DH1 3LE\\
 United Kingdom}
\email{wilhelm.klingenberg@durham.ac.uk }

\begin{abstract}
It is proven that a definite graphical rotationally symmetric line congruence evolving under mean curvature flow with respect to the neutral K\"ahler metric in the space of oriented lines of Euclidean 3-space, subject to suitable Dirichlet and Neumann boundary conditions, converges to a maximal surface. When the Neumann condition implemented is that the flowing disc be holomorphic at the boundary, it is proven that the flow converges to a holomorphic disc. 

This is extended to the flow of a family of discs with boundary lying on a fixed rotationally symmetric line congruence, which is shown to converge to a filling by maximal surfaces. Moreover, if the family is required to be holomorphic at the boundary, it is shown that the flow converges to the Bishop filling by holomorphic discs of an isolated complex point of Maslov index 2.
\end{abstract} 
\keywords{mean curvature flow, filling by holomorphic discs}
\subjclass[2010]{35K51, 32V40}

\date{\today}
\maketitle

\section{Motivation and Results}

The motivation of this paper comes from two sources. The first is the well-known Bishop family of holomorphic discs that form a filling in a neighbourhood of an isolated complex point on a real surface in a complex surface \cite{Bishop65} \cite{Elia91} \cite{Forst11} \cite{Mohnke99} \cite{Sukhov08}. The second is to demonstrate how, given the correct geometric setting and boundary conditions, a second order parabolic partial differential equation can converge to a solution of a first order equation. As such, this is a reduced version of the analytic techniques developed in the authors' proofs of conjectures of Toponogov and Carath\'eodory \cite{GK20b} \cite{GK11} . 

In particular, consider the space ${\mathbb L}$ of oriented lines in ${\mathbb R}^3$ endowed with its canonical neutral K\"ahler structure $({\mathbb G},{\mathbb J},\omega)$ \cite{GK05}. A 2-parameter family of oriented lines, classically referred to as a {\it line congruence} \cite{GK04}, forms a surface $\Sigma$ in ${\mathbb L}$, which is Lagrangian with respect to the symplectic form $\omega$ iff the lines are orthogonal to a surface $S$ in ${\mathbb R}^3$. If the surface $\Sigma$ is symplectic then the lines are said to be {\it twisting} and no orthogonal surface exists in ${\mathbb R}^3$. 

In this paper we consider the evolution of a purely twisting rotationally symmetric line congruence in ${\mathbb L}$ by its mean curvature vector with respect to the neutral metric ${\mathbb G}$. Rotationally symmetric reductions of parabolic partial differential equations have been show to yield insights into the general behaviour of such flows, including mean curvature flow \cite{Alt95} \cite{Athan97} \cite{Dzkjk91},  inverse mean curvature flow \cite{Harvie23}, linear curvature flows \cite{GK21}, fully non-linear flows \cite{McCoy14} and Ricci flow \cite{DiG21}. For another evolution seeking a hypersurface foliated by holomorphic discs see \cite{HaK99}.

For the flow to be parabolic we consider an initial surface that has definite induced metric and show that it remains so for all time. The symmetry assumption reduces the problem to that of a curve in two dimensions and we impose one Dirichlet and one Neumann condition on the boundary. The evolution is shown to reduce to the following problem for a real function $\psi$ in one space and one time dimension. 

\vspace{0.1in}

\begin{center}\fbox{\parbox{4.8in}{
\begin{center}{\Large{ \bf I.B.V.P.}}\end{center}
{\it
Let $\theta_0\in(0,\pi/2)$, $C_0\in (0,\infty)$, $C_1\in{\mathbb R}$ and $\psi_0:[0,\theta_0]\rightarrow [0,\infty]$ such that
\[
c_2\leq\frac{\psi_0'}{\sin(2\theta)}\leq c_3 \qquad\qquad for\;\;0\leq\theta\leq\theta_0.
\]
Consider a function $\psi:[0,\theta_0]\times[0,\infty)\rightarrow[0,\infty):(\theta,t)\mapsto\psi(\theta,t)$ satisfying
\[
\frac{\partial\psi}{\partial t}=\frac{\sqrt{\psi}}{\psi'}\psi''-\sqrt{\psi}\cot(2\theta) \qquad\qquad for\;\;0<\theta<\theta_0,
\]
with initial and boundary conditions:
\begin{enumerate}
\item[(i)] $\psi(t=0)=\psi_0$ for $\theta\in[0,\theta_0]$,
\item[(ii)]$\psi(\theta=\theta_0)=C_0$ for $t\geq 0$,
\item[(iii)] $\psi'(\theta=\theta_0)=2C_0\cot(2\theta_0)+C_1$ for $t\geq 0$,
\end{enumerate} 
where a prime denotes differentiation with respect to $\theta$.
\vspace{0.1in}
}
}
}
\end{center}
\vspace{0.1in}
Our main result is:
\begin{Thm}\label{t:1}
There exists $\theta_0,C_0$ and $\psi_0$ such that the solution {\bf I.B.V.P.} converges, as $ t \to \infty $, to a maximal disc in ${\mathbb L}$. 

Moreover, if in the Neumann condition (iii) we have $C_1=0$, then the flow converges  to a holomorphic disc in ${\mathbb L}$ .
\end{Thm}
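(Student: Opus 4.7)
The plan is to treat this as a quasilinear parabolic IBVP with a degenerate endpoint at $\theta=0$ and execute the standard three-step program: short-time existence, uniform \emph{a priori} estimates, and convergence to a stationary profile identified as a maximal (respectively holomorphic) disc. The existential phrasing ``there exists $\theta_0, C_0, \psi_0$'' I would read as permission to restrict to boundary data and an initial profile compatible with a fixed explicit stationary reference solution, which will serve as both target and barrier.

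The PDE is strictly parabolic wherever $\psi>0$ and $\psi'>0$, and the initial pinching $c_2\leq \psi_0'/\sin(2\theta)\leq c_3$ ensures both hold up to the coordinate singularity at $\theta=0$. I would first remove this singularity by a change of variables adapted to the rotational symmetry (for instance $\xi=-\tfrac12\cos(2\theta)$), so that the equation is genuinely quasilinear on the interior, and apply classical theory (Ladyzhenskaya--Solonnikov--Ural'tseva, or Lieberman's mixed-boundary-value refinement) to obtain short-time existence in a parabolic H\"older class.

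The heart of the argument is the uniform \emph{a priori} estimate. Stationary solutions satisfy $(\ln\psi_\infty')'=\cot(2\theta)$, hence $\psi_\infty'=c\sqrt{\sin(2\theta)}$, and $\theta_0$ and $C_0$ should be chosen so that some such $\psi_\infty$ realises the prescribed Dirichlet--Neumann data. I would compute the evolution of the pinching quantity $p:=\psi'/\sin(2\theta)$ and apply the maximum principle to show that $c_2\leq p(\theta,t)\leq c_3$ is preserved, which keeps the equation uniformly parabolic and bounds $\psi$ by integration. Schauder bootstrapping then yields uniform $C^{k,\alpha}$ estimates and long-time existence. Since mean curvature flow decreases area, $\partial_t\psi$ lies in $L^2([0,\infty);L^2)$, and combined with the uniform regularity this produces $C^2$-subsequential limits solving the stationary equation; uniqueness of the stationary profile for the given data upgrades this to full convergence to a maximal disc in $(\mathbb{L},\mathbb{G})$.

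For the second statement, the choice $C_1=0$ makes the Neumann condition read $\psi'(\theta_0)=2\psi(\theta_0)\cot(2\theta_0)$, which is precisely the pointwise condition expressing $\mathbb{J}$-invariance of the tangent plane at the boundary. Once the limit is known to be both maximal and $\mathbb{J}$-invariant at $\theta=\theta_0$, I would propagate holomorphicity inward, either through the second-order elliptic equation satisfied by the K\"ahler angle on a maximal surface in a K\"ahler manifold (via a Hopf-type maximum principle), or directly by analysing the overdetermined system combining stationarity with pointwise $\mathbb{J}$-invariance. The principal technical obstacle throughout is the degenerate endpoint $\theta=0$: the coefficient $\cot(2\theta)$ blows up, standard interior estimates fail there, and one must build sharp sub- and supersolutions from the explicit stationary family to carry the pinching bound and the limit smoothly across the axis of rotational symmetry. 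Without such control the limit might be only a punctured disc and the desired convergence would fail.
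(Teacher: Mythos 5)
Your proposal is essentially correct and its backbone --- the maximum principle applied to the pinching quantity $\psi'/\sin(2\theta)$ to preserve uniform parabolicity, followed by higher regularity and long-time existence --- is exactly the paper's Proposition \ref{p:gradest}; but you diverge from the paper in two places worth noting. For higher regularity the paper does not bootstrap via Schauder but runs a second explicit maximum-principle argument on the quantity $\psi''/(\sin(2\theta)(\psi')^{2/3})$ (Proposition \ref{p:secest}), and for convergence the paper says essentially nothing beyond long-time existence, so your area-monotonicity/$L^2$-in-time argument actually supplies a step the paper leaves implicit (though you should check that the boundary terms in the first variation of area vanish under the mixed Dirichlet--Neumann conditions in the neutral-signature setting). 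The genuine difference is the holomorphic case: you propose to first obtain a maximal limit and then propagate the boundary condition $\sigma(\theta_0)=0$ inward through the limit equation (which does work here, since by the paper's classification the maximal rotationally symmetric profiles are $\psi=a+b\cos(2\theta)$ and vanishing of $\sigma$ at a single interior $\theta_0$ forces $a=-b$, i.e.\ global holomorphicity), whereas the paper proves a quantitative decay $|\sigma|\le C/t$ \emph{along the flow} by applying the maximum principle to $|\sigma|$ itself, which satisfies a heat-type equation with a favourable $-|\sigma|^2/(2\psi)$ reaction term and vanishes on the boundary (Proposition \ref{p:ashol}, in the style of Ecker--Huisken); the paper's route buys a convergence rate and bypasses the classification of maximal profiles, while yours is softer and more robust. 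Finally, your computation of the stationary solutions, $\psi_\infty'=c\sqrt{\sin(2\theta)}$, is the honest consequence of the equation as written, yet the paper identifies the limit with $\psi=a+b\cos(2\theta)$, which satisfies $\psi''=2\cot(2\theta)\,\psi'$ rather than $\psi''=\cot(2\theta)\,\psi'$; one of the two carries a stray factor of $2$, and this must be resolved before the limit profile can be matched to the maximal/holomorphic graphs of the geometric classification.
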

\vspace{0.1in}
The link to the Bishop family of holomorphic discs arises by allowing $(\theta_0,C_0)=(\theta_0,\psi_0(\theta_0))$ to vary over a line congruence $\tilde{\Sigma}\subset{\mathbb L}$. In particular, fix a purely twisting rotationally symmetric line congruence given by $\tilde{\psi}(\vartheta)$ for $\vartheta\in(0,\vartheta_0]$ and let $\psi_0(\vartheta,\theta)$ be an initial family of disjoint purely twisting rotationally symmetric line congruences whose boundary lies on $\tilde{\Sigma}$. By rotational symmetry, the origin $0\in\tilde{\Sigma}$ is a complex point of the surface, which is assumed to be isolated.

Consider a family of real functions $\psi_\vartheta(\theta,t)$ for $\theta\in[0,\vartheta]$ and $\vartheta\in(0,\vartheta_0]$ satisfying the {\bf I.B.V.P.*} with the same evolution as the {\bf I.B.V.P.} and with boundary conditions:
\begin{enumerate}
\item[(i)*] $\psi_\vartheta(t=0)=\psi_0(\vartheta)$ for $\theta\in[0,\vartheta]$,
\item[(ii)*]$\psi_\vartheta(\theta=\vartheta)=\tilde{\psi}(\vartheta)$ for $t\geq 0$,
\item[(iii)*] $\psi_\vartheta'(\theta=\vartheta)=2{\psi}_\vartheta(\theta=\vartheta)\cot(2\vartheta)+C_1$ for $t\geq 0$.
\end{enumerate}

\begin{Thm}\label{t:2}
Under the {\bf I.B.V.P.*} the initial family $\psi_0$ of discs converges to a family of maximal discs with boundary lying on $\tilde{\Sigma}$. Moreover, if in the Neumann condition (iii)* we have $C_1=0$, then the family converges to the Bishop family of holomorphic discs in ${\mathbb L}$ associated to the complex point $0\in\tilde{\Sigma}$.
\end{Thm}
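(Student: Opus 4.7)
The plan is to treat Theorem \ref{t:2} as a parametric version of Theorem \ref{t:1}, since for each fixed $\vartheta\in(0,\vartheta_0]$ the problem {\bf I.B.V.P.*} is precisely an instance of {\bf I.B.V.P.} with $\theta_0$ replaced by $\vartheta$, $C_0$ replaced by $\tilde{\psi}(\vartheta)$, and the same Neumann constant $C_1$. The first task is to verify that the existence regime identified in Theorem \ref{t:1} is stable under variation of the parameter: since $\tilde{\Sigma}$ is rotationally symmetric with an isolated complex point at the origin and $\psi_0(\vartheta,\cdot)$ is a smooth family of disjoint initial congruences, the pinching $c_2\leq \psi_0'/\sin(2\theta)\leq c_3$ holds with constants that are locally uniform in $\vartheta$, and by the a priori estimates that underlie Theorem \ref{t:1} this uniformity will persist along the flow. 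Slicewise application then yields for each $\vartheta$ a limit $\psi_\vartheta^\infty$ corresponding to a maximal disc in ${\mathbb L}$.

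The next step is to assemble these slice limits into a smooth, disjoint family. Smoothness in $\vartheta$ follows from parabolic regularity applied to the problem linearised in the parameter direction, using the locally uniform bounds above. Disjointness is preserved along the flow: the difference $\psi_{\vartheta_1}(\theta,t)-\psi_{\vartheta_2}(\theta,t)$ satisfies a linear parabolic equation on its common domain, and the strong maximum principle prevents crossings provided the initial family is disjoint and the boundary curve $\vartheta\mapsto \tilde{\psi}(\vartheta)$ is strictly monotone, a property encoded in the pinching hypothesis on $\tilde{\psi}$. Together these properties establish the first statement, namely convergence to a filling by maximal surfaces with boundary on $\tilde{\Sigma}$.

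For the second statement, specialising to $C_1=0$ makes each limit disc holomorphic in $({\mathbb L},{\mathbb J})$ by the already-established holomorphic Neumann conclusion of Theorem \ref{t:1}. To identify the resulting family with the Bishop filling of $0\in\tilde{\Sigma}$ one verifies that each limit disc has boundary on $\tilde{\Sigma}$, that the family depends smoothly on $\vartheta$ and shrinks to the complex point as $\vartheta\to 0$, and that their rotational symmetry together with the isolated nature of the complex point forces the correct local normal form at $0$. Uniqueness of the Bishop family associated to an elliptic complex point of Maslov index $2$ (see \cite{Bishop65}, \cite{Elia91}, \cite{Forst11}) then identifies the limit as the desired Bishop filling.

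The principal obstacle is the degeneration as $\vartheta\to 0$: the domain $[0,\vartheta]$ collapses, the coefficient $\cot(2\theta)$ in the evolution becomes singular at the shrinking boundary, and the a priori estimates from Theorem \ref{t:1} may deteriorate. Controlling this limit, so as to show that the holomorphic discs accumulate continuously on the complex point and realise the correct Maslov index $2$, will require a rescaling analysis near $\vartheta=0$ that matches the asymptotic rotationally symmetric profile of the isolated complex point with the asymptotic geometry of the limit maximal surfaces obtained in the first part.
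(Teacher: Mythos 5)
Your proposal follows essentially the same route as the paper: apply Theorem \ref{t:1} to each leaf of the family individually, use the parabolic maximum principle to preserve disjointness of the leaves, and identify the limiting holomorphic foliation (when $C_1=0$) with the Bishop family of the isolated complex point. In fact your treatment is more careful than the paper's own very brief argument, which does not discuss uniformity of the a priori estimates in $\vartheta$, smooth dependence on the parameter, or the degeneration as $\vartheta\to 0$ that you rightly single out as the principal remaining issue.
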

\vspace{0.1in}
In the next section the full details of the geometric setting are given. Section \ref{s:3} formulates mean curvature flow of purely twisting rotationally symmetric line congruences with boundary conditions in ${\mathbb L}$ as the {\bf I.B.V.P.} above. The proofs of convergence of the flow in Theorems \ref{t:1} and \ref{t:2} are given in Section \ref{s:4}.

\vspace{0.1in}
\section{Geometric Background}\label{s:2}

The space ${\mathbb L}$ of oriented (affine) lines of Euclidean ${\mathbb R}^3$ can be identified with the total space $TS^2$ of the tangent bundle to the 2-sphere. The projection $\pi:{\mathbb L}=TS^2\rightarrow S^2$ maps an oriented line to its direction. This smooth 4-manifold admits a canonical neutral K\"ahler structure $({\mathbb G}, {\mathbb J},\omega)$ which is invariant under the action induced on ${\mathbb L}$ by the Euclidean group of motions \cite{GK05}. Here the neutral metric ${\mathbb G}$ is a pseudo-Riemannian metric of signature $(2,2)$.

If we choose standard holomorphic coordinates $\xi$ on $S^2$, these can be supplemented  by holomorphic coordinates $\eta$ in the fibre, so that we obtain local coordinate $(\xi,\eta)$ on ${\mathbb L}$ that are holomorphic with respect to the canonical complex structure ${\mathbb J}$.  

To explicitly link this to Euclidean coordinates $(x^1,x^2,x^3)$ in ${\mathbb R}^3$ we can use the following. The oriented line determined by $(\xi,\eta)$ is given by \cite{GK04}
\begin{equation}\label{e:mt}
x^1+ix^2=\frac{2(\eta-\bar{\eta}\xi^2)+2\xi(1+\xi\bar{\xi})r}{(1+\xi\bar{\xi})^2},
\qquad
x^3=\frac{-2(\eta\bar{\xi}+\bar{\eta}\xi)+(1-\xi^2\bar{\xi}^2)r}{(1+\xi\bar{\xi})^2},
\end{equation}
where $r$ is a unit parameterization along the line and $r=0$ is the point on the line closest to the origin.

In these coordinates the neutral metric takes the form \cite{GK05} 
\begin{equation}\label{e:metric}
{\mathbb{G}}=4(1+\xi\bar{\xi})^{-2}{\mathbb{I}}\mbox{m}\left(d\bar{\eta} d\xi+\frac{2\bar{\xi}\eta}{1+\xi\bar{\xi}}d\xi d\bar{\xi}\right).
\end{equation}

The associated symplectic structure $\omega$ can be obtained by composition of the complex structure and metric to get
\[
\omega=4(1+\xi\bar{\xi})^{-2}{\mathbb{R}}\mbox{e}\left(d\eta\wedge d\bar{\xi}-\frac{2\bar{\xi}\eta}{1+\xi\bar{\xi}}d\xi\wedge d\bar{\xi}\right).
\]
Let $\Sigma$ be a line congruence in ${\mathbb R}^3$, which we view as a surface in ${\mathbb L}$. Such a surface is said to be {\it graphical} if it is a graph of a section of the canonical bundle $\pi:{\mathbb L}\rightarrow S^2$.  Thus a graphical line congruence can be described by a complex function $(\xi,\eta=F(\xi,\bar{\xi}))$. A line congruence $\Sigma$ is said to be {\it Lagrangian} if $\omega_\Sigma=0$. As mentioned earlier, a line congruence is Lagrangian iff there exists a surface in ${\mathbb R}^3$ that is orthogonal to the lines of the congruence.
\vspace{0.1in}
\begin{Def}\label{d:spinco}
For a section $\eta=F(\xi,\bar{\xi})$, introduce the weighted complex slopes of $F$:
\[
\sigma=-\frac{\partial \bar{F}}{\partial \xi} \qquad\qquad
\rho=\varepsilon+i\lambda=(1+\xi\bar{\xi})^2\frac{\partial}{\partial \xi} \left(\frac{F}{(1+\xi\bar{\xi})^{-2}}\right).
\]
The functions $\lambda$ and $\sigma$ are called the {\it twist} and {\it shear} of the underlying family of oriented lines in ${\mathbb R}^3$. A graphical line congruence is Lagrangian iff $\lambda=0$ and {\it holomorphic} iff $\sigma=0$.
\end{Def}
\vspace{0.1in}
While Lagrangian line congruences form the oriented normal lines to a surface in ${\mathbb R}^3$, twisting line congruences arise as foliations of ${\mathbb R}^3$ by lines \cite{Salvai09}.

Note the two identities, which follow from these definitions:
\[
-(1+\xi\bar{\xi})^2\partial\left[\frac{\bar{\sigma}}{(1+\xi\bar{\xi})^2}\right]=\bar{\partial}\rho+\frac{2F}{(1+\xi\bar{\xi})^2},
\]
\begin{equation}\label{e:id2}
{\mathbb I}{\mbox{m}}\;\partial\left\{(1+\xi\bar{\xi})^2\partial\left[\frac{\bar{\sigma}}{(1+\xi\bar{\xi})^2}\right]\right\}
    =\partial\bar{\partial}\lambda+\frac{2\lambda}{(1+\xi\bar{\xi})^2}.
\end{equation}
Note that for short we have written $\partial$ to mean  differentiation with respect to $\xi$.

For the induced metric we have:

\vspace{0.1in}

\begin{Prop}\label{p:detg}
The metric induced on the graph of a section by the K\"ahler metric is given in coordinates ($\xi,\bar{\xi}$) by;
\[
g=\frac{2}{(1+\xi\bar{\xi})^2}\left[\begin{matrix}
i\sigma & -\lambda\\
-\lambda & -i\bar{\sigma}\\
\end{matrix}
\right],
\]
with inverse
\[
g^{-1}=\frac{(1+\xi\bar{\xi})^2}{2(\lambda^2-\sigma\bar{\sigma})}\left[\begin{matrix}
i\bar{\sigma} & -\lambda\\
-\lambda & -i\sigma\\
\end{matrix}
\right].
\]
\end{Prop}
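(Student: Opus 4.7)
The plan is to pull back the K\"ahler metric \eqref{e:metric} along the graph map and read off the induced coefficients in terms of the shear and twist of Definition \ref{d:spinco}. On the graph $\eta=F(\xi,\bar\xi)$ one has $d\eta=\partial F\,d\xi+\bar\partial F\,d\bar\xi$; using $\partial\bar F=-\sigma$ together with $\bar\partial\bar F=\overline{\partial F}$ gives $d\bar\eta=-\sigma\,d\xi+\overline{\partial F}\,d\bar\xi$. Substituting this and $\eta=F$ into the bracket inside $\mathbb I\mbox{m}(\cdot)$ in \eqref{e:metric} collapses it to
\[
T\;=\;-\sigma\,d\xi^{2}+\Bigl(\overline{\partial F}+\tfrac{2\bar\xi F}{1+\xi\bar\xi}\Bigr)d\xi\,d\bar\xi.
\]

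Next, evaluate $\mathbb I\mbox{m}(T)=\tfrac{1}{2i}(T-\bar T)$, noting that $\overline{d\xi^{2}}=d\bar\xi^{2}$ and $\overline{d\xi\,d\bar\xi}=d\xi\,d\bar\xi$. The $d\xi^{2}$ and $d\bar\xi^{2}$ contributions are $\tfrac{i\sigma}{2}d\xi^{2}-\tfrac{i\bar\sigma}{2}d\bar\xi^{2}$, while the mixed coefficient reduces to
\[
\tfrac{1}{2i}\Bigl[\overline{\partial F}-\partial F+\tfrac{2(\bar\xi F-\xi\bar F)}{1+\xi\bar\xi}\Bigr]=-\Bigl[\mathbb I\mbox{m}(\partial F)-\tfrac{2\,\mathbb I\mbox{m}(\bar\xi F)}{1+\xi\bar\xi}\Bigr]=-\lambda,
\]
where the final equality is $\mathbb I\mbox{m}(\rho)$ read off from Definition \ref{d:spinco}. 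Multiplying through by the conformal factor $4(1+\xi\bar\xi)^{-2}$ and reading off the symmetric matrix entries in the basis $(d\xi,d\bar\xi)$ then produces the claimed form of $g$.

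The inverse is obtained from the standard $2\times 2$ formula: the bracket matrix has determinant $(i\sigma)(-i\bar\sigma)-(-\lambda)^{2}=\sigma\bar\sigma-\lambda^{2}$, so swapping the diagonal entries, negating the off-diagonals, dividing by this determinant, and absorbing the reciprocal $(1+\xi\bar\xi)^{2}/2$ of the conformal prefactor reproduces the stated expression for $g^{-1}$ (after rewriting $\sigma\bar\sigma-\lambda^{2}=-(\lambda^{2}-\sigma\bar\sigma)$ to match the presented sign). The only mildly delicate step is the sign-bookkeeping that identifies the mixed coefficient with $-\lambda$ via the definition of $\rho$; everything else is routine linear algebra.
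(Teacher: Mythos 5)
Your computation is correct and is exactly the paper's (one-line) proof carried out in detail: pull back the metric (\ref{e:metric}) along $\eta=F(\xi,\bar\xi)$, take the imaginary part, and identify the coefficients with $\sigma$ and $\lambda$ via Definition \ref{d:spinco}, then invert the $2\times 2$ matrix. Note only that your identification of the mixed term with $-\lambda$ uses $\rho=(1+\xi\bar\xi)^2\partial\bigl(F(1+\xi\bar\xi)^{-2}\bigr)=\partial F-\tfrac{2\bar\xi F}{1+\xi\bar\xi}$, which is the intended reading of Definition \ref{d:spinco} (consistent with identity (\ref{e:id2})), so your sign-bookkeeping is sound.
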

\begin{proof}
This follows from pulling back the neutral metric (\ref{e:metric}) along a local section $\eta=F(\xi,\bar{\xi})$.
\end{proof}

\vspace{0.1in}

\begin{Def}
A surface $\Sigma\subset {\mathbb L}$ is {\it definite} if the induced metric on $\Sigma$ is either positive or negative definite. For a graph, this means that $\lambda^2-|\sigma|^2>0$, and if, in addition, $\lambda>0$ the induced metric is negative definite, while if $\lambda<0$ it is positive definite.
\end{Def}

\vspace{0.1in}

\begin{Prop}\label{p:indmet}
The induced metric on a Lagrangian surface is Lorentz, except at complex points, where it is degenerate. The induced metric on a holomorphic surface is definite, except at complex points, where it is degenerate. 
\end{Prop}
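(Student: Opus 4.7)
The plan is to read off the signature of the induced metric directly from the explicit form in Proposition \ref{p:detg}, and then specialize to the Lagrangian and holomorphic cases.

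A short calculation on the $2\times 2$ matrix of Proposition \ref{p:detg} gives
\[
\det g = \frac{4\,(|\sigma|^2 - \lambda^2)}{(1+\xi\bar\xi)^4}.
\]
Passing from $(\xi,\bar\xi)$ to real coordinates $\xi = x + iy$ introduces an overall Jacobian factor $(\det J)^2 = -4$ (since $d\xi\wedge d\bar\xi = -2i\,dx\wedge dy$), so the determinant of $g$ regarded as a real symmetric bilinear form on the tangent plane carries the sign of $\lambda^2 - |\sigma|^2$. By the Definition preceding the proposition, the induced metric is therefore (positive or negative) definite iff $\lambda^2 - |\sigma|^2 > 0$, Lorentzian iff $\lambda^2 - |\sigma|^2 < 0$, and degenerate iff $\lambda^2 = |\sigma|^2$.

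Specialization is then immediate. On a Lagrangian graph $\lambda \equiv 0$, so $\lambda^2 - |\sigma|^2 = -|\sigma|^2 \le 0$, giving Lorentz signature wherever $\sigma \ne 0$ and degeneracy precisely at $\{\sigma = 0\}$; on a holomorphic graph $\sigma \equiv 0$, so $\lambda^2 - |\sigma|^2 = \lambda^2 \ge 0$, giving definiteness wherever $\lambda \ne 0$ and degeneracy precisely at $\{\lambda = 0\}$. It remains to identify these degeneracy loci with the complex points of $\Sigma$ in each case. For the Lagrangian case, a direct check shows that the pushforward of $\partial_{\bar\xi}$ lies in $T^{1,0}_p\mathbb{L}$ iff $F_{\bar\xi}(p) = 0$, equivalently $\sigma(p) = 0$, so $\{\sigma = 0\}$ is exactly where the tangent plane is $\mathbb{J}$-invariant. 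For the holomorphic case every tangent plane is trivially $\mathbb{J}$-invariant, and the relevant complex points are the isolated points where the complex tangent line becomes null in the neutral metric, which from the determinant is precisely $\{\lambda = 0\}$. The main obstacle is not really mathematical but clerical, namely tracking signs and Jacobian factors between complex- and real-coordinate expressions and correctly matching degeneracy loci to complex points in each setting.
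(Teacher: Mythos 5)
Your proposal is correct and follows essentially the same route as the paper, which simply reads the signature off the determinant of the induced metric from Proposition \ref{p:detg}. You are somewhat more careful than the paper in tracking the real-versus-complex-coordinate sign of that determinant and in identifying the degeneracy loci $\{\sigma=0\}$ (Lagrangian case) and $\{\lambda=0\}$ (holomorphic case) with the complex points, but the underlying argument is identical.
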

\begin{proof}
By the previous Proposition we see that the determinant of the induced metric is $2(1+\xi\bar{\xi})^{-4}(\lambda^2-\sigma\bar{\sigma})$, and the result follows. 
\end{proof}

\vspace{0.1in}

Let $\Sigma\rightarrow {\mathbb L}$ be an immersed surface and assume that the induced metric on $\Sigma$ is definite, so that for $\gamma\in\Sigma$ we have the orthogonal splitting $T_\gamma {\mathbb L}=T_\gamma \Sigma\oplus N_\gamma\Sigma$. In what follows we omit the subscript $\gamma$.

\vspace{0.1in}

\begin{Prop}\label{p:frames}
If $\Sigma$ is a definite surface given by the graph $\xi\rightarrow(\xi,\eta=F(\xi,\bar{\xi}))$, then the following vector fields form an orthonormal basis for $T{\mathbb L}$ along $\Sigma$:
\[
E_{(1)}=2{\mathbb R}{ e}\left[\alpha_1\left(\frac{\partial}{\partial \xi}+\partial F\frac{\partial}{\partial \eta}
          +\partial \bar{F}\frac{\partial}{\partial \bar{\eta}}    \right) \right],
\]
\[
E_{(2)}=2{\mathbb R}{ e}\left[\alpha_2\left(\frac{\partial}{\partial \xi}+\partial F\frac{\partial}{\partial \eta}
          +\partial \bar{F}\frac{\partial}{\partial \bar{\eta}}    \right) \right],
\]
\[
E_{(3)}=2{\mathbb R}{ e}\left[\alpha_2\left(\frac{\partial}{\partial \xi}
       +(\bar{\partial} \bar{F}-2(F\partial u-\bar{F}\bar{\partial} u))\frac{\partial}{\partial \eta}
          -\partial \bar{F}\frac{\partial}{\partial \bar{\eta}}    \right) \right],
\]
\[
E_{(4)}=2{\mathbb R}{ e}\left[\alpha_1\left(\frac{\partial}{\partial \xi}
       +(\bar{\partial} \bar{F}-2(F\partial u-\bar{F}\bar{\partial} u))\frac{\partial}{\partial \eta}
          -\partial \bar{F}\frac{\partial}{\partial \bar{\eta}}    \right) \right],
\]
for
\[
\alpha_1=\frac{e^{-u-{\scriptstyle \frac{1}{2}}\phi i+{\scriptstyle \frac{1}{4}}\pi i}}
      {\sqrt{2}[-\lambda-|\sigma|]^{\scriptstyle \frac{1}{2}}}
\qquad\qquad
\alpha_2=\frac{e^{-u-{\scriptstyle \frac{1}{2}}\phi i-{\scriptstyle \frac{1}{4}}\pi i}}
      {\sqrt{2}[-\lambda+|\sigma|]^{\scriptstyle \frac{1}{2}}},
\]
where $\bar{\partial}F=-|\sigma|e^{-i\varphi}$ and we have introduced $e^{2u}=4(1+\xi\bar{\xi})^{-2}$. Note that when $|\sigma|=0$, then $\varphi$ is just a gauge freedom for the frame.

Moreover, $\{E_{(1)},E_{(2)}\}$ span $T\Sigma$ and $\{E_{(3)},E_{(4)}\}$ span $N\Sigma$. 
\end{Prop}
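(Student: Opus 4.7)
The plan is to verify the frame by direct computation, splitting into tangency of $E_{(1)}, E_{(2)}$, normality of $E_{(3)}, E_{(4)}$, and orthonormality of all four. Tangency is immediate: since $\Sigma$ is the graph $\xi \mapsto (\xi, F(\xi,\bar\xi))$, its complexified tangent space is spanned by
\[
V := \frac{\partial}{\partial \xi} + \partial F \, \frac{\partial}{\partial \eta} + \partial \bar F \, \frac{\partial}{\partial \bar\eta}
\]
and its conjugate $\bar V$, so since $E_{(i)} = \alpha_i V + \bar\alpha_i \bar V$ for $i=1,2$, these two vectors lie in $T\Sigma$ for any choice of scalars $\alpha_i$.

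For the normal directions I would introduce the complex vector
\[
W := \frac{\partial}{\partial \xi} + \bigl(\bar\partial \bar F - 2(F \partial u - \bar F \bar\partial u)\bigr)\frac{\partial}{\partial \eta} - \partial \bar F \, \frac{\partial}{\partial \bar\eta},
\]
so that $E_{(4)} = \alpha_1 W + \bar\alpha_1 \bar W$ and $E_{(3)} = \alpha_2 W + \bar\alpha_2 \bar W$, and reduce normality to showing $\mathbb{G}(V, W) = \mathbb{G}(V, \bar W) = 0$ with $\mathbb{G}$ extended complex-bilinearly. Using the explicit form (\ref{e:metric}), these pairings split into contributions from the $d\bar\eta \, d\xi$ piece and from the twist piece $\frac{2\bar\xi \eta}{1+\xi\bar\xi}d\xi \, d\bar\xi$. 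The sign flip in the $\partial/\partial\bar\eta$ component of $W$ combined with the $\bar\partial \bar F$ coefficient of $\partial/\partial\eta$ handles the first, while the correction term $-2(F\partial u - \bar F \bar\partial u) = \frac{2(F\bar\xi - \bar F \xi)}{1+\xi\bar\xi}$ built into $W$ is designed precisely to cancel the twist contribution along the graph $\eta = F$. This algebraic cancellation is the main obstacle I anticipate, and it is essentially why the natural horizontal lift in this K\"ahler setup must include a connection-like correction depending on $u$.

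For the orthonormality I would invoke Proposition \ref{p:detg} to write $\mathbb{G}(V, V) = 2i\sigma(1+\xi\bar\xi)^{-2}$ and $\mathbb{G}(V, \bar V) = -2\lambda(1+\xi\bar\xi)^{-2}$, then expand
\[
\mathbb{G}(E_{(1)}, E_{(1)}) = 2\,\mathrm{Re}\bigl[\alpha_1^2 \,\mathbb{G}(V, V)\bigr] + 2|\alpha_1|^2 \mathbb{G}(V, \bar V).
\]
Writing $\sigma = |\sigma|e^{i\varphi}$, the phase $-\varphi/2 + \pi/4$ in $\alpha_1$ is chosen so that $\alpha_1^2 \cdot i\sigma$ is real, and together with the amplitude $e^{-u}[2(-\lambda - |\sigma|)]^{-1/2}$ the whole sum reduces to $\pm 1$. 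The parallel calculation for $\alpha_2$ with phase $-\pi/4$ and denominator $[-\lambda + |\sigma|]^{1/2}$ normalizes $E_{(2)}$, and analogous computations with $W$ in place of $V$ yield the normalizations of $E_{(3)}, E_{(4)}$ with the opposite sign, giving $N\Sigma$ the opposite signature to $T\Sigma$. The off-diagonal identities $\mathbb{G}(E_{(1)}, E_{(2)}) = \mathbb{G}(E_{(3)}, E_{(4)}) = 0$ then follow from the $\pi/2$ phase difference between $\alpha_1$ and $\alpha_2$, which simultaneously forces $\alpha_1 \bar\alpha_2 + \bar\alpha_1 \alpha_2 = 0$ (killing the $\mathbb{G}(V, \bar V)$ cross term) and makes $\alpha_1\alpha_2 \cdot i\sigma$ purely imaginary (killing the $\mathbb{G}(V, V)$ cross term).
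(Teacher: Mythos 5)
Your verification is correct and is essentially the argument the paper intends: Proposition \ref{p:frames} is stated without proof as a direct computation, and your decomposition into the complex spans $\{V,\bar V\}$ and $\{W,\bar W\}$, the cancellation of the twist term of ${\mathbb G}$ along the graph by the correction $-2(F\partial u-\bar F\bar\partial u)=\tfrac{2(F\bar\xi-\bar F\xi)}{1+\xi\bar\xi}$, and the phase/amplitude choices in $\alpha_1,\alpha_2$ all check out (one finds ${\mathbb G}(W,W)=-{\mathbb G}(V,V)$ and ${\mathbb G}(W,\bar W)=-{\mathbb G}(V,\bar V)$, confirming that $N\Sigma$ carries the opposite signature). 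The only point worth double-checking when you carry out the normalization is the overall symmetrization convention for ${\mathbb G}$ and the fact that for $\lambda>0$ (the case actually used later) the quantities $-\lambda\pm|\sigma|$ are negative, so $\alpha_1,\alpha_2$ acquire imaginary square roots and the roles of the positive and negative definite subspaces interchange; neither affects the structure of your argument.
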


\vspace{0.1in}

Using the same notation as above:

\vspace{0.1in}

\begin{Prop}\label{p:dualbasis}
The dual basis of 1-forms is:
\[
\theta^{(1)}={\mathbb I}m\left[(\alpha_1\partial\bar{F}+\bar{\alpha}_1(\bar{\partial}\bar{F}
    -2(F\partial u-\bar{F}\bar{\partial} u)))d\xi-\bar{\alpha}_1d\eta\right]e^{2u},
\]
\[
\theta^{(2)}=\;{\mathbb I}m\left[(\alpha_2\partial\bar{F}+\bar{\alpha}_2(\bar{\partial}\bar{F}
    -2(F\partial u-\bar{F}\bar{\partial} u)))d\xi-\bar{\alpha}_2d\eta\right]e^{2u},
\]
\[
\theta^{(3)}=\;{\mathbb I}m\left[(\alpha_2\partial\bar{F}-\bar{\alpha}_2\partial F)d\xi
       +\bar{\alpha}_2d\eta\right]e^{2u},
\]
\[
\theta^{(4)}={\mathbb I}m\left[(\alpha_1\partial\bar{F}-\bar{\alpha}_1\partial F)d\xi
       +\bar{\alpha}_1d\eta\right]e^{2u}.
\]
\end{Prop}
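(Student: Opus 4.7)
The plan is to verify the duality relations $\theta^{(b)}(E_{(a)}) = \delta^{(b)}_{(a)}$ directly, since these determine the dual coframe uniquely. Both the frame vectors and the candidate 1-forms are built from real/imaginary parts of complex expressions in the holomorphic coordinates $(\xi,\eta)$, so my first step would be to write each $E_{(a)} = V_{(a)} + \overline{V_{(a)}}$, where $V_{(a)}$ is the complex vector field inside the outer $2\,\mathrm{Re}[\cdot]$ of Proposition~\ref{p:frames}, and each $\theta^{(b)} = \tfrac{1}{2i}\bigl(W^{(b)} - \overline{W^{(b)}}\bigr)$, where $W^{(b)}$ is the complex 1-form (times the $e^{2u}$ factor) inside the outer $\mathrm{Im}[\cdot]$ of the statement. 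Since $W^{(b)}$ is a linear combination of $d\xi$ and $d\eta$ only, while $V_{(a)}$ is a combination of $\partial_\xi$, $\partial_\eta$, $\partial_{\bar\eta}$, the contraction reduces to
\[
\theta^{(b)}(E_{(a)}) = \mathrm{Im}\bigl[W^{(b)}(V_{(a)}) + W^{(b)}(\overline{V_{(a)}})\bigr].
\]

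Next, I would substitute the polar form $\bar\partial F = -|\sigma|e^{-i\varphi}$ (which, together with $\sigma = -\partial\bar F$, fixes the phase of every derivative of $F$) and use the explicit squares and moduli
\[
|\alpha_1|^2 = \frac{e^{-2u}}{2(-\lambda-|\sigma|)}, \qquad |\alpha_2|^2 = \frac{e^{-2u}}{2(-\lambda+|\sigma|)}, \qquad \alpha_j^2 \;=\; \pm\frac{i\,e^{-2u-i\varphi}}{2(-\lambda\mp|\sigma|)}.
\]
The connection-type term $\bar\partial\bar F - 2(F\partial u - \bar F\bar\partial u)$ appears with the same sign convention in both $E_{(3)}, E_{(4)}$ and in $\theta^{(1)}, \theta^{(2)}$; this is the structural fact that allows the cross-group pairings $\theta^{(1,2)}(E_{(3,4)})$ and $\theta^{(3,4)}(E_{(1,2)})$ to telescope to zero. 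The diagonal pairings $\theta^{(a)}(E_{(a)})$ then reduce via the identities above to $1$, and the within-group off-diagonal pairings $\theta^{(1)}(E_{(2)}), \theta^{(3)}(E_{(4)})$ etc.\ vanish thanks to the opposite phases $e^{\pm i\pi/4}$ distinguishing $\alpha_1$ from $\alpha_2$.

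The main obstacle is nothing more than careful algebraic bookkeeping: tracking the term $\bar\partial\bar F - 2(F\partial u - \bar F\bar\partial u)$ consistently through all sixteen pairings. No further geometric input is required beyond the orthonormality of $\{E_{(a)}\}$ recorded in Proposition~\ref{p:frames}. As an alternative sanity check, one may note that since $\mathbb{G}$ is non-degenerate and $\{E_{(a)}\}$ is $\mathbb{G}$-orthonormal with signature pattern $\mathrm{diag}(\mp 1,\mp 1,\pm 1,\pm 1)$ (depending on the sign of $\lambda$), the dual coframe must be of the form $\theta^{(b)} = \epsilon_b\,\mathbb{G}(E_{(b)},\,\cdot\,)$ for appropriate signs $\epsilon_b\in\{\pm 1\}$; directly contracting $E_{(b)}$ with the coordinate expression~(\ref{e:metric}) of $\mathbb{G}$ then reproduces the stated formulas.
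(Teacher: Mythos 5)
Your proposal is correct: the paper states this proposition without any proof, treating it as a routine computation, and your direct verification of the sixteen pairings $\theta^{(b)}(E_{(a)})=\delta^{(b)}_{(a)}$ (using $E_{(a)}=V_{(a)}+\overline{V_{(a)}}$, $\partial\bar F=-\sigma=-|\sigma|e^{i\varphi}$, the moduli and squares of $\alpha_1,\alpha_2$, and the identity $\lambda={\mathbb I}\mathrm{m}(\partial F)+2{\mathbb I}\mathrm{m}(F\partial u)$) is precisely the omitted check. A spot check, e.g.\ $\theta^{(4)}(E_{(4)})=\tfrac{|\sigma|}{\lambda+|\sigma|}+\tfrac{\lambda}{\lambda+|\sigma|}=1$ and $\theta^{(4)}(E_{(1)})={\mathbb I}\mathrm{m}\bigl[2{\mathbb R}\mathrm{e}(e^{2u}\alpha_1^2\partial\bar F)\bigr]=0$, confirms your scheme goes through.
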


\vspace{0.1in}

Now consider the Levi-Civita connection $\overline{\nabla}$ associated with ${\mathbb G}$ and for $X,Y\in T\Sigma$ we have the orthogonal splitting
\[
\overline{\nabla}_X Y= \nabla^\parallel_X Y+A(X,Y),
\]
where $A:T\Sigma\times T\Sigma\rightarrow N\Sigma$ is the second fundamental form of the immersed surface $\Sigma$.

\vspace{0.1in}

\begin{Prop}\label{p:2ndff}
The second fundamental form is:
\[
A(e_{(a)},e_{(b)})=2{\mathbb R}{ e}\left[\beta_{ab}\left(\frac{\partial}{\partial \xi}
       +(\bar{\partial} \bar{F}-2(F\partial u-\bar{F}\bar{\partial} u))\frac{\partial}{\partial \eta}
          -\partial \bar{F}\frac{\partial}{\partial \bar{\eta}}    \right) \right],
\]
for $a,b=1,2$, where
\[
\beta_{11}=\left[i\lambda\partial |\sigma|-\sigma\bar{\partial}|\sigma|+i\lambda\partial\lambda-\sigma\bar{\partial}\lambda
    +|\sigma|(|\sigma|+\lambda)(\partial\varphi-ie^{i\varphi}\bar{\partial}\varphi+2i\partial u-2e^{i\varphi}\bar{\partial}u)\right]
\]
\[
\left/\left[2e^{2u+i\varphi}(|\sigma|+\lambda)^2(-|\sigma|+\lambda)\right]\right.,
\]
\[
\beta_{22}=\left[-i\lambda\partial |\sigma|+\sigma\bar{\partial}|\sigma|+i\lambda\partial\lambda-\sigma\bar{\partial}\lambda
    +|\sigma|(|\sigma|-\lambda)(\partial\varphi+ie^{i\varphi}\bar{\partial}\varphi+2i\partial u+2e^{i\varphi}\bar{\partial}u)\right]
\]
\[
\left/\left[2 e^{2u+i\varphi}(|\sigma|-\lambda)^2(-|\sigma|-\lambda)\right]\right.,
\]
\[
\beta_{12}=\left(-|\sigma|\partial |\sigma|+i\lambda e^{i\varphi}\bar{\partial}|\sigma|+\lambda\partial\lambda-i\sigma\bar{\partial}\lambda
    \right)
\]
\[
\left/\left[2e^{2u+i\varphi}(|\sigma|^2-\lambda^2)\sqrt{|\Delta|}\right]\right. ,
\]
where we define $\Delta=\lambda^2-|\sigma|^2$.
\end{Prop}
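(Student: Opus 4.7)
The plan is to compute $\overline{\nabla}_{E_{(a)}}E_{(b)}$ for $a,b\in\{1,2\}$ using the ambient Levi-Civita connection of $({\mathbb L},{\mathbb G})$ and then extract its normal component using the dual coframe of Proposition \ref{p:dualbasis}. Since by definition $A(X,Y)$ is purely normal, one has
\[
A(E_{(a)},E_{(b)})=\theta^{(3)}\bigl(\overline{\nabla}_{E_{(a)}}E_{(b)}\bigr)E_{(3)}+\theta^{(4)}\bigl(\overline{\nabla}_{E_{(a)}}E_{(b)}\bigr)E_{(4)},
\]
so the problem reduces to determining these two real scalars for each pair $(a,b)$. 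The linear combination of $E_{(3)}$ and $E_{(4)}$ that appears in the statement of Proposition \ref{p:2ndff} is precisely the real part of a single complex vector, and since $E_{(3)}=2\,\mathrm{Re}[\alpha_2 W]$, $E_{(4)}=2\,\mathrm{Re}[\alpha_1 W]$ with $W$ the normal complex direction, the two real projections repackage into a single complex coefficient $\beta_{ab}$.

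First I would read off the Christoffel symbols of ${\mathbb G}$ in the holomorphic coordinates $(\xi,\eta)$ directly from (\ref{e:metric}). Because $({\mathbb G},{\mathbb J},\omega)$ is K\"ahler and $(\xi,\eta)$ are ${\mathbb J}$-holomorphic, only $\Gamma^\eta_{\xi\xi},\ \Gamma^\eta_{\xi\bar\xi}$ and their conjugates are non-zero, and each is an elementary rational function of $\xi,\bar\xi,\eta,\bar\eta$ involving the conformal factor $e^{2u}=4(1+\xi\bar\xi)^{-2}$. Next, writing $E_{(a)}=2\,\mathrm{Re}[\alpha_a V]$ for $a=1,2$ with the common holomorphic tangent vector $V=\partial_\xi+\partial F\,\partial_\eta+\partial\bar F\,\partial_{\bar\eta}$ from Proposition \ref{p:frames}, the Leibniz rule gives
\[
\overline{\nabla}_{E_{(a)}}E_{(b)}=2\,\mathrm{Re}\bigl[(E_{(a)}\alpha_b)V+\alpha_b\,\overline{\nabla}_{E_{(a)}}V\bigr].
\]
The derivative of the scalar $\alpha_b$ produces the $\partial|\sigma|,\partial\lambda,\partial\varphi,\partial u$ contributions visible in the numerators of $\beta_{ab}$, while $\overline{\nabla}_{E_{(a)}}V$ delivers the remaining mixed terms through the Christoffel symbols and the identities (\ref{e:id2}).

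The final step is pointwise contraction with $\theta^{(3)}$ and $\theta^{(4)}$ from Proposition \ref{p:dualbasis}. One verifies that the $\partial F$-pieces cancel against the $\alpha$-normalizations, and that the normal projection is a complex multiple of $\partial_\xi+(\bar\partial\bar F-2(F\partial u-\bar F\bar\partial u))\partial_\eta-\partial\bar F\,\partial_{\bar\eta}$ with coefficient $\beta_{ab}$. The main obstacle is purely the algebraic volume: one must keep track of the factors $(-\lambda\pm|\sigma|)^{1/2}$ inside $\alpha_1,\alpha_2$ and demonstrate that the $\varphi$-dependence enters only through the gauge-invariant combinations $\partial\varphi\mp i e^{i\varphi}\bar\partial\varphi$ that appear in the statement, so that the apparent gauge ambiguity in $\varphi$ at $|\sigma|=0$ is absorbed correctly. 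I would organize the computation by treating the diagonal case $(a,b)=(1,1)$ first and deducing $\beta_{22}$ from the symmetry $\alpha_1\leftrightarrow\alpha_2,\ |\sigma|\to -|\sigma|$, then handling $\beta_{12}$ separately, using $\theta^{(c)}(E_{(d)})=\delta^c{}_d$ and the expected symmetry $\beta_{ab}=\beta_{ba}$ as running consistency checks.
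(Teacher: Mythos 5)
Your overall strategy coincides with the paper's: the authors likewise form the normal projection ${}^\perp P=\mathrm{id}-E_{(1)}\otimes\theta^{(1)}-E_{(2)}\otimes\theta^{(2)}$ (which on the $4$-manifold equals $E_{(3)}\otimes\theta^{(3)}+E_{(4)}\otimes\theta^{(4)}$, i.e.\ exactly your contraction formula), write $A_{(ab)}{}^{j}={}^\perp P^j_k\,E_{(a)}^l\overline{\nabla}_l E_{(b)}^k$, and evaluate this by direct computation from Propositions \ref{p:frames} and \ref{p:dualbasis}; your Leibniz-rule decomposition $\overline{\nabla}_{E_{(a)}}E_{(b)}=2\,\mathrm{Re}[(E_{(a)}\alpha_b)V+\alpha_b\overline{\nabla}_{E_{(a)}}V]$ and the symmetry checks are sensible ways of organising that same calculation.

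One concrete error in your plan needs fixing before the computation would come out right: your list of non-vanishing Christoffel symbols is wrong in both directions. Since $({\mathbb G},{\mathbb J})$ is K\"ahler and $(\xi,\eta)$ are holomorphic coordinates, every mixed symbol $\Gamma^{k}_{i\bar{\jmath}}$ vanishes, so in particular $\Gamma^{\eta}_{\xi\bar\xi}=0$. On the other hand, because the metric (\ref{e:metric}) has $g_{\eta\bar\eta}=0$ and hence $g^{\xi\bar\xi}=0$, one finds that the pure symbols $\Gamma^{\xi}_{\xi\xi}=\partial\log g_{\xi\bar\eta}=2\partial u$ and $\Gamma^{\eta}_{\xi\eta}$ are non-zero, in addition to $\Gamma^{\eta}_{\xi\xi}$ (plus conjugates). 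These omitted symbols are not negligible: they contribute to the $2i\partial u$ and $2e^{i\varphi}\bar\partial u$ terms in the numerators of $\beta_{11}$ and $\beta_{22}$, so a computation carried out with only $\Gamma^{\eta}_{\xi\xi}$ and $\Gamma^{\eta}_{\xi\bar\xi}$ would not reproduce the stated formulas. With the Christoffel symbols corrected, the rest of your outline is precisely the direct computation the paper performs.
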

\begin{proof}
Consider the parallel and perpendicular projection operators $^\parallel P:T{\mathbb L}\rightarrow T\Sigma$ and
$^\perp P:T{\mathbb L}\rightarrow N\Sigma$. These are given in terms of an adapted frame by
\[
^\parallel P_j^k=\delta_j^k-E_{(3)}^k\theta_j^{(3)}-E_{(4)}^k\theta_j^{(4)}
\qquad\qquad
^\perp P_j^k=\delta_j^k-E_{(1)}^k\theta_j^{(1)}-E_{(2)}^k\theta_j^{(2)}.
\]
The parallel projection operator has the following coordinate description:
\begin{align}
^\parallel P_{\bar{\eta}}^\xi=-{\textstyle{\frac{1}{2\Delta}}}\bar{\sigma}
&\qquad^\parallel P_{\bar{\xi}}^\xi=-{\textstyle{\frac{1}{2\Delta}}} (\bar{\partial}\bar{F}+\lambda i)\bar{\sigma},\nonumber\\
^\parallel P_\eta^\xi= -{\textstyle{\frac{1}{2\Delta}}} \lambda i
&\qquad^\parallel P_\xi^\xi= {\textstyle{\frac{1}{2\Delta}}} [(\partial F -2\lambda i)\lambda i -|\sigma|^2], \nonumber\\
^\parallel P_{\bar{\eta}}^\eta={\textstyle{\frac{1}{2\Delta}}} \bar{\sigma}(\partial F-\lambda i)
&\qquad^\parallel P_{\bar{\xi}}^\eta={\textstyle{\frac{1}{2\Delta}}} [-\bar{\sigma}[\partial F\bar{\partial}\bar{F}-|\sigma|^2
                                 -\lambda i(\bar{\partial}\bar{F}-\partial F)]+2\lambda^2],\nonumber\\
^\parallel P_\eta^\eta= -{\textstyle{\frac{1}{2\Delta}}}[\lambda i\partial F +|\sigma|^2]
&\qquad^\parallel P_\xi^\eta= {\textstyle{\frac{1}{2\Delta}}} \lambda i[(\partial F-2\lambda i)\partial F-|\sigma|^2]\nonumber,
\end{align}
while the perpendicular projection operator is
\[
^\perp P_\xi^\xi=\;^\parallel P_\eta^\eta
\qquad\qquad
^\perp P_{\bar{\xi}}^\xi=-\;^\parallel P_{\bar{\xi}}^\xi
\qquad\qquad
^\perp P_\eta^\xi=-\;^\parallel P_\eta^\xi
\qquad\qquad
^\perp P_{\bar{\eta}}^\xi=-\;^\parallel P_{\bar{\eta}}^\xi,
\]
\[
^\perp P_\xi^\eta=-\;^\parallel P_\xi^\eta
\qquad\qquad
^\perp P_{\bar{\xi}}^\eta=-\;^\parallel P_{\bar{\xi}}^\eta
\qquad\qquad
^\perp P_\eta^\eta=\;^\parallel P_\xi^\xi
\qquad\qquad
^\perp P_{\bar{\eta}}^\eta=-\;^\parallel P_{\bar{\eta}}^\eta.
\]
In terms of a frame in which $\{E_{(1)},E_{(2)}\}$ span the tangent space of $\Sigma$, the second fundamental form is
\[
A_{(ab)}^{\;\;\;\;\;j}=\;^\perp P_k^j\;E_{(a)}^l\overline{\nabla}_l\;E_{(b)}^k.
\]
The result follows by direct computation of these quantities using Propositions \ref{p:frames} and \ref{p:dualbasis}.

\end{proof}

\vspace{0.1in}

\begin{Prop}
The mean curvature vector of the surface $\Sigma$ is:
\[
H=2{\mathbb R}{ e}\left[\gamma\left(\frac{\partial}{\partial \xi}
       +(\bar{\partial} \bar{F}-2(F\partial u-\bar{F}\bar{\partial} u))\frac{\partial}{\partial \eta}
          -\partial \bar{F}\frac{\partial}{\partial \bar{\eta}}    \right) \right],
\]
where
\[
\gamma=\left[-\lambda(-i\lambda\partial |\sigma|+\sigma\bar{\partial}|\sigma|)
     -|\sigma|(i\lambda\partial\lambda-\sigma\bar{\partial}\lambda)-|\sigma|(|\sigma|^2-\lambda^2)(\partial\varphi+2i\partial u)\right]
\]
\[
\left/\left[e^{2u+i\varphi}(|\sigma|^2-\lambda^2)^2\right]\right. .
\]
\end{Prop}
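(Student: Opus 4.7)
The plan is to compute the mean curvature vector as the trace of the second fundamental form in the orthonormal frame from Proposition~\ref{p:frames}. Since the immersion is definite and $\{E_{(1)},E_{(2)}\}$ is orthonormal for the induced metric (with constant sign, fixed by the $\alpha_1,\alpha_2$ conventions, which tacitly require $-\lambda>|\sigma|$), the trace collapses to
\[
H=A(E_{(1)},E_{(1)})+A(E_{(2)},E_{(2)}).
\]

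The crucial structural observation, read off from Proposition~\ref{p:2ndff}, is that each value $A(E_{(a)},E_{(b)})$ is twice the real part of a complex multiple of the single normal direction
\[
V:=\frac{\partial}{\partial\xi}+\bigl(\bar{\partial}\bar{F}-2(F\partial u-\bar{F}\bar{\partial}u)\bigr)\frac{\partial}{\partial\eta}-\partial\bar{F}\frac{\partial}{\partial\bar{\eta}},
\]
with complex coefficient $\beta_{ab}$. Linearity of the real part operation immediately identifies $\gamma$ with $\beta_{11}+\beta_{22}$, so the proposition reduces to the algebraic identity $\beta_{11}+\beta_{22}=\gamma$.

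To verify this identity I would place the two fractions from Proposition~\ref{p:2ndff} over the common denominator $-2e^{2u+i\varphi}(|\sigma|^2-\lambda^2)^2$ and then exploit three cancellations in the combined numerator. The $\partial|\sigma|,\bar{\partial}|\sigma|$ contributions enter with opposite signs, weighted by $(|\sigma|-\lambda)$ and $(|\sigma|+\lambda)$ respectively, and collapse to give a factor of $-2\lambda$. The $\partial\lambda,\bar{\partial}\lambda$ contributions enter with matching signs and produce a factor of $+2|\sigma|$. The $e^{i\varphi}\bar{\partial}\varphi$ and $e^{i\varphi}\bar{\partial}u$ pieces cancel outright between the two summands, while the $\partial\varphi$ and $\partial u$ pieces add to produce $2|\sigma|(|\sigma|^2-\lambda^2)(\partial\varphi+2i\partial u)$. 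Collecting these terms and dividing through reproduces the stated $\gamma$.

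There is no conceptual obstacle: the main difficulty is purely algebraic bookkeeping. The $\beta_{ab}$ expressions are dense, so care is required in tracking signs and the factors $(|\sigma|\pm\lambda)$ through the combination. However, once the three symmetry patterns above are recognised the simplification is mechanical, and no geometric input beyond Proposition~\ref{p:2ndff} is needed.
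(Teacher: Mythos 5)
Your proposal follows exactly the paper's own route: the authors likewise define $H^j=A_{(11)}^{\;\;\;\;j}+A_{(22)}^{\;\;\;\;j}$ and obtain $\gamma$ by direct computation from Proposition \ref{p:2ndff}, which in your notation is precisely the identity $\gamma=\beta_{11}+\beta_{22}$. Your description of the cancellations (the $\partial|\sigma|$, $\bar\partial|\sigma|$ terms combining to a $\lambda$ factor, the $\partial\lambda$, $\bar\partial\lambda$ terms to a $|\sigma|$ factor, and the barred $\varphi$, $u$ derivatives cancelling) is consistent and correctly reproduces the stated formula, so the argument is sound and essentially identical to the paper's.
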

\begin{proof}
The mean curvature vector of the surface $\Sigma$ is the trace of the second fundamental form, which is
\[
H^j=A_{(11)}^{\;\;\;\;\;j}+ A_{(22)}^{\;\;\;\;\;j}.
\]
The result follows from computing this with the aid of the previous Proposition.
\end{proof}
\vspace{0.1in}

\begin{Note}
We can also write the mean curvature vector component (see \cite{GK08} for a variational derivation of this formula)
\begin{equation}\label{e:meanc}
H^\xi=\frac{2e^{-2u}}{\sqrt{|\lambda^2-|\sigma|^2|}}\left[
              ie^{-2u}\partial\left(\frac{\bar{\sigma}e^{2u}}{\sqrt{|\lambda^2-|\sigma|^2|}}\right)
       -\bar{\partial}\left(\frac{\lambda}{\sqrt{|\lambda^2-|\sigma|^2|}}\right)\right].
\end{equation}
\end{Note}

\begin{Cor}
A holomorphic graph is maximal: it has vanishing mean curvature.
\end{Cor}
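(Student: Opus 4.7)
The plan is to derive the corollary directly from formula (\ref{e:meanc}) for the mean curvature vector component $H^\xi$, using the definitional characterization of a holomorphic graph as one with $\sigma=0$ (Definition \ref{d:spinco}).

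First I would note that by Proposition \ref{p:indmet}, a holomorphic graph has definite induced metric away from the (isolated) complex points. Definiteness here is equivalent to $\lambda^2-|\sigma|^2>0$, which when $\sigma=0$ reduces to $\lambda^2>0$. In particular $\lambda$ is nowhere zero on the open set where the graph is smooth and definite, and hence $\lambda/|\lambda|=\mathrm{sgn}(\lambda)$ is locally constant there.

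Next I would substitute $\sigma=0$ (and so $\bar\sigma=0$, $|\sigma|=0$) into the expression
\[
H^\xi=\frac{2e^{-2u}}{\sqrt{|\lambda^2-|\sigma|^2|}}\left[ie^{-2u}\partial\!\left(\frac{\bar\sigma\,e^{2u}}{\sqrt{|\lambda^2-|\sigma|^2|}}\right)-\bar\partial\!\left(\frac{\lambda}{\sqrt{|\lambda^2-|\sigma|^2|}}\right)\right].
\]
The first term inside the brackets vanishes identically because its argument $\bar\sigma e^{2u}/\sqrt{|\lambda^2-|\sigma|^2|}$ is zero. The second term becomes $-\bar\partial(\lambda/|\lambda|)=-\bar\partial(\mathrm{sgn}(\lambda))$, which vanishes by the previous paragraph. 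Hence $H^\xi=0$.

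Finally I would observe that, by the structural form of the mean curvature vector given in the preceding Proposition, $H=2\,\mathrm{Re}[\gamma\,V]$ for a fixed nonzero vector $V$, so that $H^\xi=\gamma$. Thus $\gamma=0$ and $H\equiv 0$, i.e.\ the holomorphic graph is maximal. I do not anticipate any real obstacle: the only subtlety is ensuring that $\lambda$ does not vanish so that $\mathrm{sgn}(\lambda)$ is truly locally constant, and this is guaranteed precisely by the definiteness hypothesis supplied by Proposition \ref{p:indmet}.
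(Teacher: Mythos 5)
Your proposal is correct and is essentially the paper's own argument: the paper's proof consists precisely of substituting $\sigma=0$ into equation (\ref{e:meanc}), and you have simply spelled out the resulting computation, including the (valid) observation that $\lambda/\sqrt{|\lambda^2|}=\mathrm{sgn}(\lambda)$ is locally constant where the metric is definite, so its $\bar\partial$-derivative vanishes.
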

\begin{proof}
This follows from inserting $\sigma=0$ in equation (\ref{e:meanc}).
\end{proof}
\vspace{0.1in}

The neutral metric on ${\mathbb L}$ is invariant under the action induced on lines by Euclidean isometries acting on ${\mathbb R}^3$ \cite{GK05}. Thus, rotation about a line induces an isometry of the neutral metric and, in particular, rotation about the $x^3-$axis is expressed in holomorphic coordinates by $(\xi,\eta)\mapsto(\xi e^{i\alpha},\eta e^{i\alpha})$.

\vspace{0.1in}
\begin{Def}
A graphical line congruence is said to be {\it rotationally symmetric} if it is given by a map $(\theta,\phi)\mapsto (\xi=\tan(\theta/2)e^{i\phi}, \eta=G(\theta)e^{i\phi})$ for some complex function $G$. A rotationally symmetric line congruence will be said to be {\it purely twisting} if $G$ is imaginary.
\end{Def}

\vspace{0.1in}
\begin{Prop}
For a graphical rotationally symmetric line congruence with defining complex function $G$ as above, 
\[
\lambda= {{\frac{i\cos(\theta/2)}{4\sin(\theta/2)}}}\left[ \cos(\theta/2)\sin(\theta/2)(\bar{G}'-G')+(4\sin^2(\theta/2)-1)(G-\bar{G})\right]
\]
\[
\sigma=-{{\frac{\cos(\theta/2)}{2\sin(\theta/2)}}}\left[ \cos(\theta/2)\sin(\theta/2) \bar{G}'-\bar{G}\right]e^{-2i\phi}.
\]
If, moreover, the line congruence is purely twisting so that $G+\bar{G}=0$ one has
\begin{equation}\label{e:lsrs}
\lambda={\textstyle{\frac{1}{2\sqrt{\psi}}}}(\psi'+2\cot\theta\;\psi) \qquad \sigma={\textstyle{\frac{i}{2\sqrt{\psi}}}}(\psi'-2\cot\theta\;\psi)e^{-2i\phi},
\end{equation}
where we have introduced the positive function $\psi$ defined by
\[
\eta=F=Ge^{i\phi}=\frac{i\sqrt{\psi(\theta)}}{\cos^2(\theta/2)}e^{i\phi}.
\]
\end{Prop}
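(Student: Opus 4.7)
This proposition is proved by a direct coordinate calculation, which I would organise in three stages.

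\emph{Chain rule and general formulas.} First I would invert the Jacobian of $\xi=\tan(\theta/2)e^{i\phi}$. A short computation, verified by checking $\partial_\xi\xi=1$ and $\partial_\xi\bar\xi=0$, yields
\[
\partial_\xi=\cos^2(\theta/2)\,e^{-i\phi}\,\partial_\theta-\frac{i\cos(\theta/2)}{2\sin(\theta/2)}\,e^{-i\phi}\,\partial_\phi,
\]
with a conjugate formula for $\partial_{\bar\xi}$. Writing $\bar F=\bar G(\theta)e^{-i\phi}$ gives $\partial_\theta\bar F=\bar G'e^{-i\phi}$ and $\partial_\phi\bar F=-i\bar Ge^{-i\phi}$, so substituting into $\sigma=-\partial\bar F$ and pulling out $e^{-2i\phi}$ together with $\cos(\theta/2)/[2\sin(\theta/2)]$ yields the first asserted formula. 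For $\lambda$, I would evaluate $\rho$ from Definition~\ref{d:spinco} by applying $\partial_\xi$ to the appropriate combination of $F=G(\theta)e^{i\phi}$ with powers of $(1+\xi\bar\xi)=\sec^2(\theta/2)$, and then take the imaginary part. The half and double-angle identities $2\sin(\theta/2)\cos(\theta/2)=\sin\theta$ and $1-2\sin^2(\theta/2)=\cos\theta$ consolidate the resulting expression, while reality of $\lambda$ forces the $G$-dependent pieces to regroup into the combinations $\bar G'-G'$ and $G-\bar G$ as stated.

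\emph{Specialisation to pure twist.} Imposing $G+\bar G=0$ allows one to replace $\bar G'-G'$ by $-2G'$ and $G-\bar G$ by $2G$ in the general formula. Writing $G=i\sqrt\psi\,\sec^{2}(\theta/2)$ gives
\[
G'=i\sec^{2}(\theta/2)\Bigl[\tfrac{\psi'}{2\sqrt\psi}+\sqrt\psi\tan(\theta/2)\Bigr].
\]
Substituting into the general $\lambda$-formula and simplifying with $\sec^{2}(\theta/2)=1+\tan^{2}(\theta/2)$ together with the identity $\tan(\theta/2)+(2\cos\theta-1)/\sin\theta=\cot\theta$ collapses the expression to $\lambda=(\psi'+2\cot\theta\,\psi)/(2\sqrt\psi)$; the analogous substitution into the $\sigma$-formula (where only $\bar G,\bar G'$ appear) gives $\sigma=(i/(2\sqrt\psi))(\psi'-2\cot\theta\,\psi)e^{-2i\phi}$.

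The proof is entirely mechanical; the only real obstacle is the bookkeeping of trigonometric identities, so that in the general formulas the mixed $(G,\bar G,G',\bar G')$ contributions consolidate correctly, and in the specialisation the coefficients of $\psi'$ and $\psi$ emerge exactly as claimed without residual $\tan(\theta/2)$ factors.
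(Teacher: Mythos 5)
Your proposal is correct and follows essentially the same route as the paper, whose proof simply cites the change of variables (\ref{e:dcoords}) and Definition \ref{d:spinco} and leaves the computation to the reader. One small caveat: carrying out your $\sigma$-computation literally produces $2\cos(\theta/2)\sin(\theta/2)\bar{G}'=\sin\theta\,\bar{G}'$ inside the bracket rather than the displayed $\cos(\theta/2)\sin(\theta/2)\bar{G}'$ --- apparently a typo in the statement's intermediate formula --- and it is the corrected version that specialises to (\ref{e:lsrs}), exactly as your final substitution confirms.
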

\begin{proof}
    The first two equations follow from the expressions for the twist and shear in Definition \ref{d:spinco} and the transformation from holomorphic to polar coordinates $(\xi,\bar{\xi})\mapsto(\theta,\phi)$ for which
    \begin{equation}\label{e:dcoords}
    \frac{\partial}{\partial \xi}=\cos^2(\theta/2)\left(\frac{\partial}{\partial\theta}-\frac{i}{2\sin(\theta/2)\cos(\theta/2)}\frac{\partial}{\partial\phi}\right)e^{-i\phi},
    \end{equation}
    while the second two follow from substitution of the purely twisting condition.
\end{proof}
\vspace{0.1in}
\begin{Prop}
    For a purely twisting rotationally symmetric line congruence, the real function $\psi$ above is one quarter of the distance-squared of the oriented line to the origin.
\end{Prop}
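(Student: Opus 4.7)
The plan is to directly compute the squared distance from the origin to the line using the parameterization in equation (\ref{e:mt}), specialize to $r=0$ (which gives the point of the line nearest the origin), and then substitute the purely twisting rotationally symmetric ansatz to obtain $4\psi$.

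First, I would set $r=0$ in (\ref{e:mt}) to obtain the closest point
\[
x^1+ix^2=\frac{2(\eta-\bar{\eta}\xi^2)}{(1+\xi\bar{\xi})^2},\qquad x^3=\frac{-2(\eta\bar{\xi}+\bar{\eta}\xi)}{(1+\xi\bar{\xi})^2},
\]
so that the squared distance from the origin to the line is $d^2=|x^1+ix^2|^2+(x^3)^2$ evaluated at these expressions. Next, I would substitute the rotationally symmetric purely twisting form $\xi=\tan(\theta/2)e^{i\phi}$ and $\eta=F=i\sqrt{\psi(\theta)}\,e^{i\phi}/\cos^2(\theta/2)$, noting the useful identity $1+\xi\bar{\xi}=\sec^2(\theta/2)$.

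The main computational step is then direct. A short calculation gives
\[
\eta\bar{\xi}+\bar{\eta}\xi=\tfrac{\sin(\theta/2)}{\cos^{3}(\theta/2)}\bigl(i\sqrt{\psi}-i\sqrt{\psi}\bigr)=0,
\]
so the closest point lies in the $x^3=0$ plane, as one would expect from the rotational symmetry about the $x^3$-axis. For the horizontal part one computes
\[
\eta-\bar{\eta}\xi^2=\frac{i\sqrt{\psi}\,e^{i\phi}}{\cos^{2}(\theta/2)}+\frac{i\sqrt{\psi}\,e^{i\phi}\sin^{2}(\theta/2)}{\cos^{4}(\theta/2)}=\frac{i\sqrt{\psi}\,e^{i\phi}}{\cos^{4}(\theta/2)},
\]
using $\cos^{2}(\theta/2)+\sin^{2}(\theta/2)=1$. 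Squaring and dividing by $(1+\xi\bar{\xi})^{4}=\cos^{-8}(\theta/2)$ then collapses the powers of $\cos(\theta/2)$, giving $|x^1+ix^2|^{2}=4\psi$.

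Assembling the pieces, $d^{2}=4\psi$ and hence $\psi=d^{2}/4$, as claimed. There is no real obstacle here; the argument is a routine but slightly delicate bookkeeping exercise in which the key cancellations are the vanishing of $\eta\bar{\xi}+\bar{\eta}\xi$ (a manifestation of rotational symmetry together with the purely twisting condition $G+\bar{G}=0$) and the exact cancellation of the $\cos(\theta/2)$ factors in the horizontal component.
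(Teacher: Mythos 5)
Your proposal is correct and follows essentially the same route as the paper: both evaluate equation (\ref{e:mt}) at $r=0$ to locate the foot of the perpendicular from the origin and then compute the squared norm. The only difference is one of bookkeeping order --- the paper first derives the coordinate-free identity $\chi^2=4\eta\bar{\eta}/(1+\xi\bar{\xi})^2$ for a general line and then substitutes the purely twisting ansatz, whereas you substitute the ansatz first; both yield $4\psi$.
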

\begin{proof}
This follows from the fact that the distance $d$ to the origin of an oriented line with holomorphic coordinates $(\xi,\eta)$ is
(see equation (\ref{e:mt}) with $r=0$)
\[
x^1_0+ix^2_0=-\frac{2(\eta-\bar{\eta}\xi^2)}{(1+\xi\bar{\xi})^2},
\qquad
x^3_0=-\frac{2(\eta\bar{\xi}+\bar{\eta}\xi)}{(1+\xi\bar{\xi})^2},
\]
and so the square of the perpendicular distance to the origin is
\[
\chi^2= (x^1_0)^2+(x^2_0)^2+(x^3_0)^2 =\frac{4\eta\bar{\eta}}{(1+\xi\bar{\xi})^2}=4\psi.
\] 
\end{proof}
\vspace{0.1in}
Note that $\lambda^2-|\sigma|^2=2\cot\theta\psi'$ and so by the proof of Proposition \ref{p:detg} a purely twisting congruence is (negative) definite if $\psi'>0$.

\vspace{0.1in}
\begin{Prop} {\label{hol:1}}
    The only non-singular purely twisting rotationally symmetric maximal graphs have
    \[
    \psi=a+b\cos(2\theta),
    \]
    for $a,b\in{\mathbb R}$. The only pure twisting rotationally symmetric holomorphic graphs are those with $a=-b$.
\end{Prop}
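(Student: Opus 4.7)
The statement has two parts: the structural claim that every non-singular rotationally symmetric purely twisting maximal graph has $\psi = a + b\cos(2\theta)$, and the refinement that the holomorphic members of this family are those with $a=-b$. The plan is to dispose of the (simpler) holomorphic case first by direct integration, then derive the maximal ODE from (\ref{e:meanc}), and finally identify the holomorphic subfamily.

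For the holomorphic case, (\ref{e:lsrs}) says $\sigma = 0$ is equivalent to the separable first-order ODE $\psi' = 2\cot\theta\,\psi$, which integrates to $\psi = C\sin^2\theta$. Using the double-angle identity $\sin^2\theta = \tfrac12(1-\cos(2\theta))$ rewrites this as $\psi = \tfrac{C}{2} - \tfrac{C}{2}\cos(2\theta)$, of the claimed form with $a=-b$.

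For the maximal case, I would substitute (\ref{e:lsrs}) into the component $H^\xi$ given by (\ref{e:meanc}) and simplify. The crucial structural observation is that $\lambda = \Lambda(\theta)$ is independent of $\phi$ while $\sigma = S(\theta)e^{-2i\phi}$, and $e^{2u} = 4\cos^4(\theta/2)$ depends only on $\theta$. Consequently
\[
\frac{\lambda}{\sqrt{|\lambda^2-|\sigma|^2|}}
\]
is a function of $\theta$ alone, while $\bar\sigma\,e^{2u}/\sqrt{|\lambda^2-|\sigma|^2|}$ is (function of $\theta$) times $e^{2i\phi}$. Applying $\partial = \cos^2(\theta/2)e^{-i\phi}[\partial_\theta - (i/\sin\theta)\partial_\phi]$ and its conjugate $\bar\partial$ as in (\ref{e:dcoords}) produces a common $e^{i\phi}$ factor in both terms inside the bracket of (\ref{e:meanc}), which cancels. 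The non-singularity hypothesis gives $\lambda^2 - |\sigma|^2 = 2\cot\theta\,\psi' > 0$, so the square-roots are well-defined.

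After collecting terms using $\sin\theta = 2\sin(\theta/2)\cos(\theta/2)$ and expanding $(\log D)' = D'/D$ for $D = \sqrt{2\psi\cot\theta\,\psi'}$, I expect $H^\xi = 0$ to reduce to the separable ODE
\[
\frac{\psi''}{\psi'} = 2\cot(2\theta).
\]
Integrating once yields $\psi' = C\sin(2\theta)$, and a further integration gives $\psi = a + b\cos(2\theta)$. To pick out the holomorphic subfamily, a short computation from this ansatz using $\psi' = -2b\sin(2\theta)$ and $a+b\cos(2\theta) = (a+b) - 2b\sin^2\theta$ yields $\psi' - 2\cot\theta\,\psi = -2(a+b)\cot\theta$, so $\sigma \equiv 0$ on $(0,\pi/2)$ forces $a+b=0$.

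The main obstacle is the bookkeeping in the maximal step: one must carefully expand the derivatives of the square-root factors in (\ref{e:meanc}), track the interaction between $\psi,\psi',\psi''$ and the trigonometric coefficients, and verify that the algebraic cross-terms collapse to a single ODE. The computation is mechanical but lengthy; beyond this symbolic work no conceptual difficulty arises, and the Corollary immediately after (\ref{e:meanc}) already guarantees that the holomorphic solutions $\psi = C\sin^2\theta$ sit inside the maximal family, providing a useful sanity check.
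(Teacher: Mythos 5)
Your route is genuinely different from the paper's: the paper does not derive the maximal ODE at all, but instead quotes Theorem~3 of \cite{GK08}, which classifies \emph{all} rotationally symmetric maximal graphs as $\eta=\bigl(A_1R+B_1R^{-1}(1+R^2)^2\pm i\sqrt{\cdots}\bigr)e^{i\phi}$ with $R=\tan(\theta/2)$, then imposes the purely twisting condition $A_1=B_1=0$ and changes variables to obtain $\psi=a+b\cos(2\theta)$. Your holomorphic part is complete, correct, and in fact more self-contained than the paper's: $\sigma=0$ iff $\psi'=2\cot\theta\,\psi$ iff $\psi=C\sin^2\theta=\tfrac{C}{2}(1-\cos 2\theta)$, and conversely $\psi'-2\cot\theta\,\psi=-2(a+b)\cot\theta$ for the general maximal solution, so $\sigma\equiv 0$ forces $a=-b$. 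That computation checks out.

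The gap is in the maximal part: the entire content of the first claim is the reduction of $H^\xi=0$ to an ODE for $\psi$, and you only say you ``expect'' this to be $\psi''/\psi'=2\cot(2\theta)$ without carrying out the substitution of (\ref{e:lsrs}) into (\ref{e:meanc}). Your target ODE is indeed the unique one whose solution space is $\{a+b\cos(2\theta)\}$, and your observation that the $e^{i\phi}$ factors cancel is the right structural point, but as written the decisive computation is asserted rather than proved. Be warned also that the obvious shortcut --- setting $\partial\psi/\partial t=0$ in the paper's reduced flow equation $\partial_t\psi=\tfrac{\sqrt{\psi}}{\psi'}\psi''-\sqrt{\psi}\cot(2\theta)$ --- yields $\psi''=\cot(2\theta)\,\psi'$, which is \emph{not} solved by $a+b\cos(2\theta)$ nor by the holomorphic $\psi=C\sin^2\theta$; since the Corollary after (\ref{e:meanc}) guarantees holomorphic graphs are maximal, there is a factor discrepancy somewhere in the printed reduction, and your sanity check would flag it rather than confirm your ODE. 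So to close the argument you must either perform the full expansion of (\ref{e:meanc}) or, as the paper does, invoke the classification in \cite{GK08}.
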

\begin{proof}
    By Theorem 3 of \cite{GK08}, the only rotationally symmetric maximal graphs are
    \[
    \eta=\left(A_1R+B_1R^{-1}(1+R^2)^2\pm i\sqrt{A_2R^2+B_2(1+R^2)^2-B_1^2R^{-2}(1+R^2)^4}\right)e^{i\phi},
    \]
    where $A_1,A_2,B_1,B_2\in{\mathbb R}$ and $R=\tan(\theta/2)$. To be pure twisting we must have $A_1=0$ and $B_1=0$. Then changing coordinates from $R$ to $\theta$ yields the stated result.
\end{proof}
\vspace{0.1in}
\section{The Parabolic Evolution}\label{s:3}
We now investigate the initial boundary value problem, namely unparameterised mean curvature flow with boundary conditions. In particular we consider a family of definite sections $f_t:D\rightarrow {\mathbb L}$ such that
\[
\frac{\partial f}{\partial t}^\bot=H,
\]
where $H$ is the mean curvature vector of $f_t(D)$ and $\bot$ is projection perpendicular to $f_t(D)$ with respect to ${\mathbb G}$.

Consider the evolution when the flowing surface is a graph of a section of ${\mathbb L}\rightarrow S^2$. In this case it is most convenient to use the base to parameterize the surfaces. That is, we consider a flowing surface given by $\xi\mapsto(\xi,\eta=F_t(\xi,\bar{\xi}))$. 

We compute the explicit expressions for the flow of the complex function $F_t$.
\vspace{0.1in}
\begin{Prop}\label{p:graphflow}
For a definite graph in ${\mathbb L}$, the mean curvature flow is
\begin{align}
\frac{\partial F}{\partial t}=&g^{jk}\partial_j\partial_k F+\frac{i\bar{\sigma}}{\Delta}\left((\sigma\xi-\bar{\rho}\bar{\xi})
    (1+\xi\bar{\xi})+\bar{F}-\bar{\xi}^2F\right)\nonumber\\
&=\frac{(1+\xi\bar{\xi})^2}{2(\lambda^2-\sigma\bar{\sigma})}\left(-2\bar{\sigma}\partial\lambda-i\bar{\sigma}\bar{\partial}\sigma+2\lambda\partial\bar{\sigma}+i\sigma\bar{\partial}\bar{\sigma}
   +\frac{4i\bar{\sigma}(\sigma\xi+\lambda i\bar{\xi})}{1+\xi\bar{\xi}}\right)\nonumber.
\end{align}
\end{Prop}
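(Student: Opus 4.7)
The plan is to derive this proposition by reducing the unparameterised mean curvature flow condition to a scalar equation for $F$ and then performing the algebraic simplification that yields both displayed forms. Since the section $f_t(\xi,\bar\xi) = (\xi, F_t(\xi,\bar\xi))$ has velocity with vanishing $\xi$- and $\bar\xi$-components, the condition $(\partial f/\partial t)^\perp = H$ is equivalent to requiring $\partial f/\partial t - H$ to be tangent to $\Sigma_t$. The tangency condition $V^\eta = V^\xi \partial F + V^{\bar\xi}\bar\partial F$ (and its conjugate) characterising tangent vectors to the graph then forces
\begin{equation*}
\frac{\partial F}{\partial t} = H^\eta - H^\xi \partial F - H^{\bar\xi}\bar\partial F,
\end{equation*}
which is the starting point.

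Next I would substitute the intrinsic formula (\ref{e:meanc}) for $H^\xi$, together with the explicit expression for $H$ from the preceding Proposition that determines $H^\eta$ and $H^{\bar\xi}$. Expanding the derivatives of $\sqrt{|\Delta|}$ and of the conformal factor $e^{2u} = 4/(1+\xi\bar\xi)^2$, using $\partial u = -\bar\xi/(1+\xi\bar\xi)$, and observing that the $-H^\xi\partial F - H^{\bar\xi}\bar\partial F$ contribution cancels both the $-\bar\gamma\,\partial F$ term inside $H^\eta$ and the tangential piece of the frame corrections $F\partial u - \bar F\bar\partial u$ coming from non-flatness of $\mathbb L$, the right-hand side rearranges into the second displayed equality. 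The residual $4i\bar\sigma(\sigma\xi + i\lambda\bar\xi)/(1+\xi\bar\xi)$ term is precisely the ambient-curvature contribution, traceable to $\partial u = -\bar\xi/(1+\xi\bar\xi)$.

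For the first, quasilinear-parabolic form I would re-express $\partial\sigma$, $\bar\partial\sigma$, $\partial\bar\sigma$, $\partial\lambda$ and $\bar\partial\lambda$ as second derivatives of $F$ via $\sigma = -\partial\bar F$ and the expression for $\rho$ from Definition \ref{d:spinco}. The coefficients of $\partial^2 F$, $\partial\bar\partial F$ and $\bar\partial^2 F$ that emerge match exactly the entries of $g^{-1}$ in Proposition \ref{p:detg}, so the leading-order part assembles into the intrinsic Laplacian $g^{jk}\partial_j\partial_k F$. The surviving remainder — which contains $F$ and $\bar F$ explicitly, reflecting that the flow on $\mathbb L$ depends on position and not just on derivatives — collapses into $(i\bar\sigma/\Delta)\bigl((\sigma\xi - \bar\rho\bar\xi)(1+\xi\bar\xi) + \bar F - \bar\xi^2 F\bigr)$ by invoking the identity preceding (\ref{e:id2}), which rewrites $\bar\partial\rho + 2F/(1+\xi\bar\xi)^2$ as a $\partial$-derivative of a weighted $\bar\sigma$. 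The principal obstacle is purely algebraic bookkeeping: the proposition is an identity, but the manipulation involves several substitutions of $(1+\xi\bar\xi)^{\pm k}$ powers and $\xi\leftrightarrow\bar\xi$ swaps, and the key structural input is the identity preceding (\ref{e:id2}), which is what packages the $2F/(1+\xi\bar\xi)^2$ defect term — arising from the non-commutativity of $\partial$ with weighting by the conformal factor — into the clean $\bar F - \bar\xi^2 F$ combination that appears in the statement.
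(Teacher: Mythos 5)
Your proposal is correct and follows essentially the same route as the paper: the paper unpacks the unparameterised flow condition via the normal projection operators, obtaining $\dot F=-2i\lambda H^{\xi}+2\bar\sigma H^{\bar\xi}$, which is algebraically identical to your tangency formulation $\dot F=H^{\eta}-H^{\xi}\partial F-H^{\bar\xi}\bar\partial F$ since $H$ is already normal and $\bar\partial\bar F-\partial F-2(F\partial u-\bar F\bar\partial u)=-2i\lambda$, $\bar\partial F=-\bar\sigma$. The remaining steps — substituting the expression (\ref{e:meanc}) for $H^{\xi}$, and using the identity preceding (\ref{e:id2}) to match the second-derivative terms against $g^{jk}\partial_j\partial_k F$ with the zeroth-order remainder $\tfrac{i\bar\sigma}{\Delta}\bigl((\sigma\xi-\bar\rho\bar\xi)(1+\xi\bar\xi)+\bar F-\bar\xi^2 F\bigr)$ — are exactly those of the paper's proof.
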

\begin{proof}
Consider a definite surface $f:\Sigma\times[0,t_0)\rightarrow{\mathbb L}$ such that 
$f_t(\xi,\bar{\xi})=(\xi,F_t(\xi,\bar{\xi}))$. Then
\[
\frac{\partial f}{\partial t}=\frac{\partial F}{\partial t}\frac{\partial }{\partial \eta}+\frac{\partial \bar{F}}{\partial t}\frac{\partial }{\partial \bar{\eta}}.
\]
Projecting onto the normal of $\Sigma$
\begin{align}
\frac{\partial f}{\partial t}^\bot=&(^\bot P^\xi_\eta \dot{F}+\;^\bot P^\xi_{\bar{\eta}} \dot{\bar{F}})\frac{\partial }{\partial \xi}
    +(^\bot P^\eta_\eta \dot{F}+\;^\bot P^\eta_{\bar{\eta}} \dot{\bar{F}})\frac{\partial }{\partial \eta}\nonumber\\
&\qquad+(^\bot P^{\bar{\xi}}_{\bar{\eta}} \dot{\bar{F}}+\;^\bot P^{\bar{\xi}}_{\eta} \dot{F})\frac{\partial }{\partial \bar{\xi}}
    +(^\bot P^{\bar{\eta}}_{\bar{\eta}} \dot{\bar{F}}+\;^\bot P^{\bar{\eta}}_{\eta} \dot{F})\frac{\partial }{\partial \bar{\eta}}\nonumber,
\end{align}
and so the mean curvature flow is
\[
^\bot P^\xi_\eta \dot{F}+\;^\bot P^\xi_{\bar{\eta}} \dot{\bar{F}}=H^\xi,
\]
or from the expressions of the projection operators given in the proof of Proposition \ref{p:2ndff}
\[
\frac{\lambda i}{2\Delta}\dot{F}-\frac{\bar{\sigma}}{2\Delta} \dot{\bar{F}}=H^\xi.
\]
Combining this with its complex conjugate we have
\begin{equation}\label{e:fdot1}
\dot{F}=-2\lambda iH^\xi+2\bar{\sigma}H^{\bar{\xi}}.
\end{equation}
Using the expression (\ref{e:meanc}) for the mean curvature we get that
\begin{align}
H^\xi=&\frac{(1+\xi\bar{\xi})^2}{4\Delta^2}\Big[2\left(i\partial\bar{\sigma}-\bar{\partial}\lambda
    -\frac{2i\bar{\xi}\bar{\sigma}}{1+\xi\bar{\xi}}\right)\Delta\nonumber\\
  &\qquad\qquad\qquad-2i\lambda\bar{\sigma}\partial\lambda+i\sigma\bar{\sigma}\partial\bar{\sigma}+i\bar{\sigma}^2\partial \sigma+2\lambda^2\bar{\partial}\lambda
      -\lambda\sigma\bar{\partial}\bar{\sigma}-\lambda\bar{\sigma}\bar{\partial}\sigma\Big]\nonumber,
\end{align}
and the second equality stated in the Proposition follows from inserting this in equation (\ref{e:fdot1}).

To see that the first equality in the Proposition holds, compute
\begin{align}
g^{jk}\partial_j\partial_k F&=\frac{(1+\xi\bar{\xi})^2}{2\Delta}\left(i\bar{\sigma}\partial^2F-2\lambda\partial\bar{\partial}F
     -i\sigma\bar{\partial}^2F \right)\nonumber\\
&=\frac{(1+\xi\bar{\xi})^2}{2\Delta}\left[i\bar{\sigma}\partial\left(\theta+i\lambda+\frac{2\bar{\xi}F}{1+\xi\bar{\xi}}\right)
    +2\lambda\partial\bar{\sigma}+i\sigma\bar{\partial}\bar{\sigma} \right]\nonumber\\
&=\frac{(1+\xi\bar{\xi})^2}{2\Delta}\left[-2\bar{\sigma}\partial\lambda-i\bar{\sigma}\bar{\partial}\sigma+i\sigma\bar{\partial}\bar{\sigma} 
   +2\lambda\partial\bar{\sigma}\right.\nonumber\\
&\qquad\qquad\qquad\qquad\left.+i\bar{\sigma}\left(\frac{2(\sigma\xi+\rho\bar{\xi})}{1+\xi\bar{\xi}}
   -\frac{2(\bar{F}-\bar{\xi}^2F)}{(1+\xi\bar{\xi})^2} \right)\right]\nonumber,
\end{align}
where we have used identity (\ref{e:id2}) in the more convenient form
\[
\partial\theta=i\partial\lambda-(1+\xi\bar{\xi})^2\partial\left(\frac{\bar{\sigma}}{(1+\xi\bar{\xi})^2}\right)-\frac{2F}{(1+\xi\bar{\xi})^2}.
\]
Thus 
\[
g^{jk}\partial_j\partial_k F+\frac{i\bar{\sigma}}{\Delta}\left((\sigma\xi-\bar{\rho}\bar{\xi})
    (1+\xi\bar{\xi})+\bar{F}-\bar{\xi}^2F\right)\qquad\qquad\qquad\qquad\qquad\qquad\qquad\qquad
\]
\[
=\frac{(1+\xi\bar{\xi})^2}{2(\lambda^2-\sigma\bar{\sigma})}\left(-2\bar{\sigma}\partial\lambda-i\bar{\sigma}\bar{\partial}\sigma
    +i\sigma\bar{\partial}\bar{\sigma}+2\lambda\partial\bar{\sigma}+\frac{4i\bar{\sigma}(\sigma\xi+\lambda i\bar{\xi})}{1+\xi\bar{\xi}}\right),
\]
as claimed.
\end{proof}
\vspace{0.1in}

 We now reduce the above equations in the case considered in this paper.
\vspace{0.1in}
\begin{Prop}
    Mean curvature flow for a purely twisting rotationally symmetric graph reduces to the following single equation for the real function $\psi(\theta)$:
    \[
\frac{\partial\psi}{\partial t}=\frac{\sqrt{\psi}}{\psi'}\psi''-\sqrt{\psi}\cot(2\theta), 
\]
where a prime represents differentiation with respect to $\theta$.
\end{Prop}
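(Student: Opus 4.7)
The plan is to substitute the purely twisting rotationally symmetric ansatz
\[
F = i\sqrt{\psi(\theta,t)}\sec^2(\theta/2)\,e^{i\phi}
\]
together with the explicit expressions for $\lambda,\sigma$ from \eqref{e:lsrs} into the graph mean curvature flow equation of Proposition \ref{p:graphflow}, and show that the resulting complex equation collapses to the stated real scalar PDE for $\psi$. The left-hand side is immediate:
\[
\frac{\partial F}{\partial t}=\frac{i\dot{\psi}}{2\sqrt{\psi}}\sec^2(\theta/2)\,e^{i\phi},
\]
so the task reduces to showing that the right-hand side equals
\[
\left(\frac{\sqrt{\psi}}{\psi'}\psi''-\sqrt{\psi}\cot(2\theta)\right)\frac{i}{2\sqrt{\psi}}\sec^2(\theta/2)\,e^{i\phi}.
\]

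To compute the $\partial_\xi$-derivatives on the right-hand side I use the coordinate change formula \eqref{e:dcoords}, which expresses $\partial_\xi$ as $\cos^2(\theta/2)(\partial_\theta-\tfrac{i}{\sin\theta}\partial_\phi)e^{-i\phi}$. Because $\lambda$ depends only on $\theta$, while $\sigma$ carries the spin factor $e^{-2i\phi}$ and $F$ carries $e^{i\phi}$, every $\partial_\phi$ reduces to an algebraic multiplication, and every term in the flow equation carries the same overall $\phi$-weight as the LHS. Thus $e^{i\phi}$ factors out cleanly, leaving a real ODE in $\theta$ for each $t$. Throughout I use $\Delta=\lambda^2-|\sigma|^2=2\cot\theta\,\psi'$ (observed in the text preceding Proposition \ref{hol:1}) together with $(1+\xi\bar\xi)^2=\sec^4(\theta/2)$.

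The main obstacle is algebraic bookkeeping. I would introduce the shorthands $A=\psi'+2\cot\theta\,\psi$ and $B=\psi'-2\cot\theta\,\psi$, so that $\lambda=A/(2\sqrt{\psi})$ and $\sigma=iBe^{-2i\phi}/(2\sqrt{\psi})$, and exploit the identities $A+B=2\psi'$ and $A^2-B^2=8\cot\theta\,\psi\psi'=4\psi\Delta$. The second derivative $\psi''$ enters only through the $\partial_\xi\partial_{\bar\xi}F$, $\partial_\xi^2 F$, and $\partial_{\bar\xi}^2 F$ pieces of $g^{jk}\partial_j\partial_k F$ (equivalently through $\partial\lambda$ and $\bar\partial\sigma$); after division by $\Delta=2\cot\theta\,\psi'$ its coefficient collapses to $\sqrt{\psi}/\psi'$, matching the diffusive term of the claim. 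All remaining first-order and algebraic contributions should then reassemble via the double-angle identity $\cot\theta-\tan\theta=2\cot(2\theta)$ into the single source term $-\sqrt{\psi}\cot(2\theta)$, completing the reduction.
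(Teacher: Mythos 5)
Your proposal is correct and follows essentially the same route as the paper: direct substitution of the purely twisting rotationally symmetric ansatz, the expressions (\ref{e:lsrs}) for $\lambda$ and $\sigma$, and the coordinate change (\ref{e:dcoords}) into the graph flow equation of Proposition \ref{p:graphflow}, using $\Delta=2\cot\theta\,\psi'$. Your observation that the right-hand side must come out as a purely imaginary multiple of $\sec^2(\theta/2)e^{i\phi}$ is exactly the paper's remark that the flow preserves the purely twisting (and rotationally symmetric) condition, which is what reduces the complex system to the single real equation.
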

\begin{proof}
    This follows from the expressions given in equations (\ref{e:lsrs}) for the twist and shear and equation (\ref{e:dcoords}) for the derivarives into those of Proposition \ref{p:graphflow}. 
    
    In particular, note that the flow preserves the purely twisting condition and therefore mean curvature flow reduces from a system to a single equation.
\end{proof}
\vspace{0.1in}
The boundary conditions we consider are the Dirichlet condition (ii) $\psi(\theta_0)=C_0$ for some $\theta_0\in(0,\pi/2)$, $C_0\in{\mathbb R}$ and the Neumann condition (iii) that
\[
\psi'(\theta_0)=2C_0\cot(2\theta_0)+C_1, 
\]
for some $C_1\in{\mathbb R}$. Note that by the second of equations (\ref{e:lsrs}) if $C_1=0$  the disc is holomorphic at $\theta=\theta_0$.

\vspace{0.1in}
\section{Proofs of the Main Theorems}\label{s:4}

\vspace{0.1in}
\begin{Prop}\label{p:gradest}
    Under mean curvature flow, a definite purely twisting rotationally symmetric graph in ${\mathbb L}$ satisfies the following a prior\'i estimates
    \[
    0 < C_4(\psi_0)\leq\psi\leq C_5(\psi_0)
    \]
    \[
    0 < C_2(\psi_0)\leq\frac{\psi'}{\sin(2\theta)}\leq C_3(\psi_0).
    \] 
\end{Prop}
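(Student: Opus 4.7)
The plan is to establish both estimates simultaneously via the parabolic maximum principle, with the weighted gradient $Q:=\psi'/\sin(2\theta)$ as the primary auxiliary quantity. Its positivity is equivalent to definiteness of the induced metric, because $\lambda^2-|\sigma|^2=2\cot\theta\,\psi'=4Q\cos^2\theta$, so a lower bound $Q\geq C_2>0$ simultaneously preserves the definite hypothesis and keeps the evolution uniformly parabolic. Once two-sided bounds on $Q$ are secured, the bounds on $\psi$ follow by integration: the smoothness of the graph at the pole $\xi=0$ forces $\psi(0,t)=0$, so
\[
\psi(\theta,t)=\int_0^\theta Q(s,t)\sin(2s)\,ds,
\]
which yields $C_2\sin^2\theta\leq\psi\leq C_3\sin^2\theta$ and, combined with the Dirichlet condition $\psi(\theta_0,t)=C_0$, delivers the stated two-sided bound $C_4\leq\psi\leq C_5$ on compact subsets of $(0,\theta_0]$.

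To apply the maximum principle to $Q$ I would first rewrite the PDE. Substituting $\psi'=Q\sin(2\theta)$ and $\psi''=Q'\sin(2\theta)+2Q\cos(2\theta)$ yields the compact form
\[
\psi_t=\sqrt{\psi}\Bigl(\frac{Q'}{Q}+\cot(2\theta)\Bigr),
\]
and differentiating in $\theta$ then dividing through by $\sin(2\theta)$ produces a strictly parabolic equation for $Q$ with principal part $\bigl(\sqrt{\psi}/(Q\sin(2\theta))\bigr)Q''$ and lower-order terms smooth on $\{Q>0\}$. The Neumann condition (iii) becomes the time-independent Dirichlet datum $Q(\theta_0,t)=(2C_0\cot(2\theta_0)+C_1)/\sin(2\theta_0)$. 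At an interior maximum $(\theta^*,t^*)$ of $Q$ one has $Q'=0$ and $Q''\leq 0$, so the evolution inequality for $Q_t$ collapses to the zeroth-order remainder $Q\cot(2\theta)/(2\sqrt{\psi})-2\sqrt{\psi}/\sin^3(2\theta)$; feeding in the envelope $\psi(\theta^*,t^*)\leq Q(\theta^*,t^*)\sin^2\theta^*$ coming from the integral representation closes this into a self-consistent inequality that forces $\max Q$ to be controlled by its initial and boundary data. A symmetric computation at interior minima, together with the observation that $Q\equiv 0$ is an absorbing subsolution, propagates the strictly positive lower bound $C_2(\psi_0)$ for all time.

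The principal obstacle is the coordinate singularity at $\theta=0$: both the PDE and the $Q$-equation are degenerate there and $\psi$ itself vanishes, so the maximum principle on the closed interval $[0,\theta_0]$ requires a tailored boundary analysis at the pole. I would carry out the argument first on $[\varepsilon,\theta_0]$ and then pass to the limit $\varepsilon\to 0$ using the explicit family $\psi_*=a+b\cos(2\theta)$ of Proposition~\ref{hol:1} as upper and lower barriers, choosing the parameters so that $\psi_*$ straddles the initial profile and matches the boundary value at $\theta_0$. A secondary subtlety is the sign change of $\cot(2\theta)$ across $\theta=\pi/4$, which may force the maximum principle to be applied after multiplying $Q$ by a suitable time-independent weight, or to be run separately on $[0,\pi/4]$ and $[\pi/4,\theta_0]$ with flux control across the dividing line.
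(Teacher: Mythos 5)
Your overall strategy is the paper's: the estimate is obtained by applying the parabolic maximum principle to the weighted gradient $Q=\psi'/\sin(2\theta)$, whose positivity is equivalent to definiteness (indeed $\lambda^2-|\sigma|^2=2\cot\theta\,\psi'$), with the boundary point handled by the time-independent Dirichlet and Neumann data. Your derivation of the bounds on $\psi$ itself by integrating $Q$ from the pole is a reasonable supplement to the paper, which is silent on that half of the statement. The difference lies in the interior computation. The paper asserts that at an interior turning point of $Q$ the evolution collapses to pure diffusion, $\partial_t Q=\frac{\sqrt{\psi}}{\psi'}Q''$, with \emph{no} zeroth-order remainder, so the maximum principle applies immediately to give both the upper and the lower bound. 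You instead retain the remainder $\frac{Q\cot(2\theta)}{2\sqrt{\psi}}-\frac{2\sqrt{\psi}}{\sin^3(2\theta)}$, and everything in your proof then hinges on absorbing it.

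That absorption step is where your argument has a genuine gap. At an interior spatial maximum of $Q$ the dangerous term is $+\frac{Q\cot(2\theta)}{2\sqrt{\psi}}$, which is large precisely where $\psi$ is \emph{small}; the envelope $\psi(\theta^*,t^*)\leq Q(\theta^*,t^*)\sin^2\theta^*$ coming from your integral representation bounds $\psi$ from \emph{above} at that point and therefore cannot control this term -- a profile in which $Q$ spikes at $\theta^*$ while $\int_0^{\theta^*}Q\sin(2s)\,ds$ stays small defeats the proposed ``self-consistent inequality.'' At an interior spatial minimum the term $-\frac{2\sqrt{\psi}}{\sin^3(2\theta)}$ is strictly negative, so the differential inequality by itself permits $\min Q$ to decrease; since the positive lower bound on $Q$ is exactly what preserves definiteness and uniform parabolicity, this is the estimate you cannot afford to lose. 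The remark that ``$Q\equiv 0$ is an absorbing subsolution'' does not repair this: the operator $\frac{\sqrt{\psi}}{\psi'}\partial_\theta^2$ degenerates as $Q\to 0$, so $Q\equiv 0$ is not a legitimate comparison function for the equation, and no quantitative barrier keeping $Q$ away from zero is produced. To make your route work you would need to exhibit an explicit time-independent sub/supersolution pair for the full $Q$-equation (for instance built from the profiles $\psi=a+b\cos(2\theta)$ you mention) and verify the sign of the remainder along them on all of $(0,\theta_0)$, including across $\theta=\pi/4$; as written, that verification is the missing step, and it is precisely the point at which your computation and the paper's claimed identity part company.
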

\begin{proof}
    This follows from the fact that at an interior turning point of the quantity ${\textstyle{\frac{\psi'}{\sin(2\theta)}}}$ we have
    \[
    \frac{\partial}{\partial t}\left(\frac{\psi'}{\sin(2\theta)}\right)=\frac{\sqrt{\psi}}{\psi'}\left(\frac{\psi'}{\sin(2\theta)}\right)'',
    \]
    and so by the maximum principle, the second estimate holds for interior turning points. 

    By the Dirichlet and Neumann conditions the estimates also hold at the boundary.
\end{proof}
\vspace{0.1in}

In the case where the Neumann condition is holomorphicity along the boundary, i.e. $C_1=0$ in conditions (iii) and (iii)*, the flowing graph is asymptotically holomorphic.
\vspace{0.1in}
\begin{Prop}\label{p:ashol}
    Under mean curvature flow with holomorphic boundary condition $C_1=0$, a definite purely twisting rotationally symmetric graph in ${\mathbb L}$ has shear satisfying
    \[
   |\sigma|\leq \frac{C_6(\theta_0,\psi_0)}{t} \qquad\qquad for\;\; t>0.
    \]
   \end{Prop}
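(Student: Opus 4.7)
The plan is to derive a Bernstein-type differential inequality for $|\sigma|$ along the reduced flow, and then either to track the pointwise maximum or to compare with the explicit supersolution $C_6/t$ of the scalar ODE $\phi' = -c_7 \phi^2$. The two inputs that drive the argument are the a priori estimates of Proposition \ref{p:gradest} and the vanishing of the shear on both spatial ends of the interval.

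First I would verify that $|\sigma|(0,t) = |\sigma|(\theta_0,t) = 0$ for all $t \ge 0$. At $\theta = \theta_0$ this is exactly the content of the holomorphic Neumann condition $C_1 = 0$ in (iii). At $\theta = 0$ it follows from the smoothness of the rotationally symmetric graph through the axis of symmetry, which forces the shear to vanish at the pole at every time. Hence $|\sigma|$ is identically zero on the entire spatial boundary.

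Next comes the Bernstein calculation. Starting from the reduced flow $\psi_t = (\sqrt\psi/\psi')\psi'' - \sqrt\psi\cot(2\theta)$, I differentiate once in $\theta$ and use the explicit formula $|\sigma|^2 = (\psi' - 2\cot\theta\,\psi)^2/(4\psi)$ from (\ref{e:lsrs}), together with the gradient pinching of Proposition \ref{p:gradest} (which forces $\psi$ and $\psi'/\sin(2\theta)$ to lie in fixed positive intervals, and so makes the coefficient $\sqrt\psi/\psi'$ uniformly bounded above and below). The target is a parabolic differential inequality
\[
\partial_t |\sigma| \;\le\; L|\sigma| - c_7 |\sigma|^2,
\]
where $L$ is a second order linear uniformly parabolic operator with bounded coefficients and $c_7 = c_7(\psi_0, \theta_0) > 0$. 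The crucial content is the negative quadratic term; it reflects the structural fact from Proposition \ref{hol:1} that the holomorphic discs $\psi = C\sin^2\theta$ are the only static solutions of the reduced flow on the zero set of $|\sigma|$, so the nonlinear remainder generated by the $\theta$-differentiation must be coercive in $|\sigma|$ away from holomorphicity.

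With this inequality in hand, set $M(t) := \max_{[0,\theta_0]} |\sigma|(\cdot,t)$. Since $|\sigma|$ vanishes on the spatial boundary, $M(t)$ is attained at an interior point, where $L|\sigma| \le 0$ (the principal part being non-positive at a spatial maximum, and any zero-order contribution being absorbed by a standard exponential rescaling). Consequently $M' \le -c_7 M^2$ in the viscosity sense, whose integration yields $M(t) \le 1/(c_7 t)$, i.e. the stated bound with $C_6 = 1/c_7$.

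The principal obstacle is the Bernstein step: isolating a genuine negative quadratic term $-c_7 |\sigma|^2$ requires a careful computation exploiting both the nonlinearity $\sqrt\psi/\psi'$ of the flow and the two-sided bound on $\psi'/\sin(2\theta)$. A secondary technicality is that $|\sigma|$ fails to be smooth on its zero set; in practice one derives the analogous inequality for $|\sigma|^2$, obtains $|\sigma|^2 \le 4/(c_7 t)^2$ either via comparison with $(C_6/(t+\epsilon))^2$ or by Kato's inequality, and extracts the linear-in-$t$ bound on $|\sigma|$ at the end.
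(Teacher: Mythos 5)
Your overall architecture coincides with the paper's: one computes the evolution of $|\sigma|=(\sqrt{\psi})'-\cot\theta\,\sqrt{\psi}$ along the reduced flow, finds a good term $-|\sigma|^2/(2\psi)$, notes that the holomorphic Neumann condition kills $|\sigma|$ on the boundary, and then applies the maximum principle together with ODE comparison against $y'=-cy^2$ (the paper cites Lemma 4.5 of Ecker--Huisken for exactly this step) to obtain the $1/t$ decay. Your identification of the negative quadratic term as the crucial structural input is correct, and your remark about the non-smoothness of $|\sigma|$ at its zero set is a legitimate technical point that the paper handles implicitly by working with the signed quantity.

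There is, however, a genuine gap in your treatment of the lower-order remainder. The actual computation (carried out in the paper) produces
\[
\frac{\partial}{\partial t}|\sigma|=\frac{\sqrt{\psi}}{\psi'}|\sigma|''-\frac{|\sigma|^2}{2\psi}
+\frac{a_3(\psi')^3+a_2(\psi')^2\psi+a_1\psi'\psi^2+a_0\psi^3}{2\sin^4\theta\cos^2\theta\,\psi(\psi')^2},
\]
and the last term is an inhomogeneous rational expression in $(\theta,\psi,\psi')$; it is \emph{not} of the form $b\,|\sigma|$ with bounded $b$, so it cannot be folded into a linear operator $L$ as you propose. Worse, even if it were of the form $b\,|\sigma|$ with $b>0$, your suggested exponential rescaling would not rescue the conclusion: the Riccati inequality $M'\le bM-c_7M^2$ only forces $M$ toward the equilibrium $b/c_7>0$, and after substituting $N=e^{-bt}M$ one finds $M(t)\lesssim b/c_7$ rather than $M(t)\lesssim 1/(c_7t)$. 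The decay rate $1/t$ requires the zero-order contribution to be genuinely non-positive at the spatial maximum. The paper secures this by showing that the remainder above is non-positive once $\theta_0$ and $\psi'$ are taken sufficiently small --- which is precisely why Theorem \ref{t:1} is stated as ``there exists $\theta_0$, $C_0$ and $\psi_0$'' rather than for arbitrary data. Your proposal never engages with this smallness requirement, and without it the Bernstein step you outline does not close.
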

\begin{proof}
    This follows from the fact that at an interior turning point of  $|\sigma|=(\sqrt{\psi})'-\cot\theta\sqrt{\psi}$ we have
    \[
    \frac{\partial}{\partial t}|\sigma|=\frac{\sqrt{\psi}}{\psi'}|\sigma|''-\frac{|\sigma|^2}{2\psi}+\frac{a_3(\psi')^3+a_2(\psi')^2\psi+a_1(\psi')\psi^2+a_0\psi^3}{2\sin^4\theta\cos^2\theta\psi(\psi')^2},
    \]
    where
    \[
    a_3=-\cos^3\theta\sin^3\theta \qquad a_2=\sin^2\theta(2\sin^4\theta-1),
    \]
    and
    \[
    a_1=8\cos^3\theta\sin\theta \qquad a_0=-\cos^2\theta.
    \]
    This means that for small $\theta_0$ and $\psi'$
    \[
    \frac{\partial}{\partial t}|\sigma|\leq\frac{\sqrt{\psi}}{\psi'}|\sigma|''-\frac{|\sigma|^2}{2\psi}.
    \]
    Thus, if the boundary is holomorphic, by the maximum principle and ODE comparison the estimate holds (see e.g. Lemma 4.5 of \cite{EaH91}). 
\end{proof}
\vspace{0.1in}
\begin{Prop}\label{p:secest}
    Under mean curvature flow, a definite purely twisting rotationally symmetric graph in ${\mathbb L}$ satisfies the following a prior\'i estimate
    \[
    C_6(\psi_0,\theta_0)\leq\frac{\psi''}{\sin(2\theta)(\psi')^{2/3}}\leq C_3(\psi_0,\theta_0).
    \] 
\end{Prop}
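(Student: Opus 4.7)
The plan is to mirror the maximum-principle argument of Propositions \ref{p:gradest} and \ref{p:ashol}, this time applied to the derived quantity
\[
Q(\theta,t):=\frac{\psi''}{\sin(2\theta)\,(\psi')^{2/3}}.
\]
The weight $\sin(2\theta)$ is the natural one suggested by the previous proposition (it already appears in the $\psi'$-estimate), and the exponent $2/3$ must be chosen so that, after computing $Q_t$ by the quotient rule, the bad first-order pieces coming from $\partial_\theta(\sqrt{\psi}/\psi')$ and from the drift $-\sqrt{\psi}\cot(2\theta)$ are either absorbed into a multiple of $Q'$ (and therefore annihilated at an interior turning point) or are controlled by the previously established bounds on $\psi$ and $\psi'/\sin(2\theta)$.

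Concretely, I would first differentiate the evolution equation
\[
\psi_t=\frac{\sqrt{\psi}}{\psi'}\psi''-\sqrt{\psi}\cot(2\theta)
\]
twice in $\theta$ to obtain evolution equations for $\psi'$ and $\psi''$, each of leading parabolic type with operator $(\sqrt{\psi}/\psi')\partial_\theta^2$. Substituting into
\[
Q_t=\frac{\psi''_t}{\sin(2\theta)(\psi')^{2/3}}-\frac{2}{3}\frac{\psi''\,\psi'_t}{\sin(2\theta)(\psi')^{5/3}}-\frac{2\cos(2\theta)\psi''}{\sin^2(2\theta)(\psi')^{2/3}}
\]
and using the relation $Q'=0$ at an interior turning point (which expresses $\psi'''$ in terms of lower-order quantities), the expected structure is
\[
Q_t\le\frac{\sqrt{\psi}}{\psi'}Q''+f(\psi,\psi',\theta)\,Q-g(\psi,\psi',\theta)\,Q^{1+\alpha}
\]
for some $\alpha>0$, with $f,g$ bounded in terms of $\psi_0$ and $\theta_0$ by Proposition \ref{p:gradest}. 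Maximum principle and ODE comparison (cf.\ Lemma 4.5 of \cite{EaH91}) then give the upper bound; the lower bound follows by applying the same argument to $-Q$.

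For the boundary at $\theta=\theta_0$, differentiating the Dirichlet condition (ii) in $t$ gives $\psi_t(\theta_0)=0$, so the evolution equation forces
\[
\psi''(\theta_0)=\psi'(\theta_0)\cot(2\theta_0),
\]
which together with (ii), (iii) makes $Q(\theta_0,t)$ an explicit constant depending only on $C_0,C_1,\theta_0$. At $\theta=0$, rotational symmetry and smoothness of $\psi$ at the fibre over the north pole force $\psi'(0,t)=0$, so $Q$ is understood as a limit; the initial hypothesis on $\psi_0$ together with the asymptotics $\psi'\sim \psi''(0)\sin(2\theta)/2$ provides the needed initial bound, and the maximum-principle comparison preserves it.

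The main obstacle is the algebraic verification that the interior-turning-point substitution really produces a residual of the correct sign. The two non-parabolic ingredients of the PDE, namely the variable coefficient $\sqrt{\psi}/\psi'$ and the drift $\sqrt{\psi}\cot(2\theta)$, together generate numerous cross terms of varying homogeneities in $\psi'$ and in $\cot(2\theta)$, $\csc^2(2\theta)$; these must recombine so that the $-g Q^{1+\alpha}$ contribution dominates whenever $|Q|$ is large. The exponent $2/3$ is delicate, and if the straightforward computation were to leave a stubborn term of the wrong sign one would have to tweak $Q$ by a multiplicative correction factor (for instance a function of $\psi$, or an $e^{-\lambda t}$-type damping) built from the already-controlled quantities $\psi$, $\psi'/\sin(2\theta)$, and $|\sigma|$.
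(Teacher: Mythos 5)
Your overall strategy coincides with the paper's: apply the maximum principle to the very same quantity $B=\psi''/(\sin(2\theta)(\psi')^{2/3})$ at interior turning points, and control the boundary by differentiating the Dirichlet condition in time to obtain $\psi''(\theta_0)=\cot(2\theta_0)\psi'(\theta_0)$. That part of your plan is exactly what the paper does. However, the entire mathematical content of this proposition lies in the step you explicitly defer --- the sign of the zeroth-order terms after the turning-point substitution --- and the structure you anticipate is not the one that actually arises. The paper's computation yields
\[
\frac{\partial B}{\partial t}=\frac{\sqrt{\psi}}{\psi'}B''+a(\psi'')^2+b\psi''+c,
\]
a \emph{quadratic in $\psi''$} with explicit coefficients $a,b,c$ depending on $\theta,\psi,\psi'$; one checks that $a\leq 0$ and that $\lim_{\theta\rightarrow 0}\sin^6\theta\,(b^2-4ac)=-\tfrac{41\psi}{9(\psi')^{10/3}}\leq 0$, so that for $\theta_0$ and $\psi_0$ sufficiently small the whole reaction term is non-positive and the plain maximum principle closes the argument. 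No term of the form $-gQ^{1+\alpha}$ appears and no ODE comparison in the style of Lemma 4.5 of \cite{EaH91} is needed; conversely, the smallness restriction on $\theta_0$ and $\psi_0$ (which is why Theorem \ref{t:1} is stated as ``there exists $\theta_0$, $C_0$ and $\psi_0$'') is essential and your proposal does not anticipate it. Writing ``the expected structure is \dots'' and ``if the computation were to leave a stubborn term of the wrong sign one would have to tweak $Q$'' concedes that the proof has not been carried out: the discriminant computation is the proof.

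A secondary gap: you claim the lower bound follows ``by applying the same argument to $-Q$.'' The evolution equation for $B$ is not odd under $B\mapsto -B$ (the reaction term is an inhomogeneous quadratic in $\psi''$), so this symmetry argument is not available; a non-positive reaction term controls interior maxima but says nothing directly at interior minima, and the two-sided bound requires the full discriminant information, not just the sign of the leading coefficient. Your remarks about the behaviour at $\theta=0$ are reasonable but do not substitute for the missing interior computation.
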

\begin{proof}
    At an interior turning point of the quantity 
    \[
    B=\frac{\psi''}{\sin(2\theta)(\psi')^{2/3}},
    \]
    we have
    \begin{equation}\label{e:sdest}
    \frac{\partial B}{\partial t}=\frac{\sqrt{\psi}}{\psi'}B''+a(\psi'')^2+b\psi''+c,
    \end{equation}
    where 
    \[
    a=\frac{-28\cos(2\theta)\psi-\sin(2\theta)\psi'}{6\sin^2(2\theta)\sqrt{\psi}(\psi')^{8/3}}\leq0
    \]
    \[
    b=\frac{-3\sin(2\theta)(\psi')^2+20\cos(2\theta)\psi\psi'-(80+32\cot^2(2\theta))\sin(2\theta)\psi^2}{12\sin^2(2\theta)\sqrt{\psi}\psi(\psi')^{8/3}}
    \]
    \[
    c=\frac{\cot(2\theta)(\psi')^2+8(1+\cot^2(2\theta))\psi\psi'-32\cot(2\theta)(1+\cot^2(2\theta))\psi^2}{2\sin(2\theta)\sqrt{\psi}\psi(\psi')^{2/3}}.
    \]
    Now a calculations shows that
    \[
    \lim_{\theta\rightarrow 0}\sin^6\theta(b^2-4ac)=-\frac{41\psi}{9(\psi')^{10/3}}\leq 0.
    \]
    Thus, for small enough $\theta_0$ and $\psi_0$, the quadratic terms in $\psi''$ in equation (\ref{e:sdest}) are non-positive and the estimate follows from the maximum principle in the interior.

    For the boundary, differentiate the Dirichlet condition with respect to time to conclude that
    \[
    \psi''(\theta_0)=\cot(2\theta_0)\psi'(\theta_0)
    \]
    which is bounds the second derivative.    
\end{proof}
\vspace{0.1in}
\noindent{\bf Proof of Theorem \ref{t:1}}:

\vspace{0.1in}
By Proposition \ref{p:gradest} and since $\theta_0 < \pi/2$, the evolution  {\bf I.B.V.P.} is strictly parabolic for as long as the solution $\psi(\cdot , t)$ exists and is smooth, say for $ t \in [0,T)$.  Again by the bounds in Proposition \ref{p:gradest}, the Nash-Moser regularity gives uniform $C^{1,\alpha}$ bounds over the open maximal time interval. This allows to extend the solution to $t = T$ in $C^{1,\alpha}$. Smoothness now follows from Proposition \ref{p:secest}, and allows to repeat the argument to give a smooth solution on $[0, T = \infty)$.

Finally by Proposition \ref{p:ashol}, we have a holomorphic limit in case $C_1 = 0$. 
\qed \\
\vspace{0.1in}

Explicitly, the flow converges to the purely twisting congruence $\psi=a+b\cos(2\theta)$, which can be written in terms of the initial and boundary conditions as: 
\[
a=\frac{-C_1\cos(2\theta_0)+2\cot(\theta_0)\psi_0}{2\cot(\theta_0)(1-\cos(2\theta_0))} \qquad \qquad 
b=\frac{C_1-2\cot(\theta_0)\psi_0}{2\cot(\theta_0)(1-\cos(2\theta_0))}.
\]
\vspace{0.1in}

\noindent{\bf Proof of Theorem \ref{t:2}}:

\vspace{0.1in}
Fix a purely twisting rotationally symmetric graphical line congruence $\tilde{\Sigma}$ with an isolated complex point at $0\in\tilde{\Sigma}$. Now choose an initial family of disjoint purely twisting rotationally symmetric graphical line congruences whose boundary lies on $\tilde{\Sigma}$, parameterized by the angle $\theta$ at which they intersect $\tilde{\Sigma}$. 

The proof then is the same as that of Theorem \ref{t:1} for each leaf of the foliation individually. That they remain disjoint follows from the parabolic maximum principle and the flow converges to a foliation by maximal surfaces. In the case of holomorphic boundary condition $C_1=0$, the foliation is by holomorphic discs, namely the Bishop family of the isolated complex point $0\in\tilde{\Sigma}$.
\qed

\end{document}